\theoremstyle{plain}
\newtheorem{theorem*}{Theorem}
\newtheorem*{lemma*} {Lemma}
\newtheorem{corollary*}{Corollary}
\newtheorem{proposition*}[theorem*]{Proposition}
\newtheorem*{conjecture*}{Conjecture}
\newtheorem{theorem}{Theorem}[section]
\newtheorem{lemma}[theorem]{Lemma}
\newtheorem*{theorem1*}{Theorem 1}
\newtheorem*{theorem2*}{Theorem 2}
\newtheorem*{theorem3*}{Theorem 3}
\newtheorem*{theorem4*}{Theorem 4}
\newtheorem*{theorem5*}{Theorem 5}
\newtheorem*{theorem6*}{Theorem 6}
\newtheorem*{corollary1*}{Corollary 1}
\newtheorem*{corollary2*}{Corollary 2}
\newtheorem*{corollary3*}{Corollary 3}
\newtheorem*{corollary4*}{Corollary 4}
\newtheorem*{corollary5*}{Corollary 5}
\newtheorem*{corollary6*}{Corollary 6}
\newtheorem{corollary}[theorem]{Corollary}
\newtheorem{proposition}[theorem]{Proposition}
\newtheorem{question}[theorem]{Question}
\theoremstyle{remark}
\newtheorem*{remark}{Remark}
\newtheorem*{remarks}{Remarks}
\newtheorem*{definition}{Definition}
\newtheorem{example*}{Example}
\newtheorem*{claim}{Claim}
\newtheorem*{observation}{Observation}
\theoremstyle{definition}
 \def\Q{\Bbb{Q}}  \def\Z{\Bbb{Z}} \def\R{\Bbb{R}} 
\def\N{\Bbb{N}}   \def\ll{\langle} \def\rr{\rangle}
 \def\a{\alpha}   \def\bp{\begin{pmatrix}}
\def\sm{\setminus} \def\ep{\end{pmatrix}} \def\bn{\begin{enumerate}} 
  \def\div{\mbox{div}} \def\en{\end{enumerate}}
\def\ba{\begin{array}} \def\ea{\end{array}} 
 \def\S{\Sigma}  \def\a{\alpha} \def\b{\beta} \def\ti{\wti}
\def\be{\begin{equation}} \def\ee{\end{equation}} 
 \def\hom{\mbox{Hom}}
\def\zt{\Z[t^{\pm 1}]}    
\def\w{\omega}   
    \def\fr12{\frac{1}{2}} \def\z12{\Z[\fr12]}
\def\k{\kappa}
\def\wti{\widetilde}
\def\spinc{\operatorname{Spin}^c}
\def\cc{\mathfrak{c}}
\begin{document}
\title{Minimal genus in 4--manifolds with a free circle action}
\author{Stefan Friedl}
\address{Mathematisches Institut, Universit\"at zu K\"oln, Germany}
\email{sfriedl@gmail.com}
\author{Stefano Vidussi}
\address{Department of Mathematics, University of California,
Riverside, CA 92521, USA} \email{svidussi@math.ucr.edu}
\thanks{The second author was partially supported by NSF grant \#0906281.}
\date{\today}
\begin{abstract}
 Let $N$ be a closed irreducible $3$--manifold and assume $N$ is not a graph manifold.
We improve for all but finitely many $S^1$--bundles $M$ over $N$ the adjunction inequality for the minimal complexity of embedded surfaces. This allows us to completely determine the minimal complexity of embedded surfaces in all but finitely many $S^1$--bundles over a large class of $3$--manifolds.
\end{abstract}
\maketitle

\section{Introduction and main results}

\subsection{Complexity of surfaces in 4--manifolds}

Given a compact surface $\Sigma $ with connected components $\Sigma_1,\dots,\Sigma_{l}$ its \textit{complexity} is defined as \[ \chi_{-}(\Sigma) = \sum_{i=1}^{l} \mbox{max}(-\chi(\Sigma_{i}),0). \]
Given a smooth closed 4--manifold  $M$ and
 $\a\in H_2(M)$ we define
 \[ x(\a):= \mbox{min} \{ \chi_{-}(\Sigma)\, |\,  \Sigma \subset M\mbox{ embedded surface which represents }\a\}. \]
It is a classical problem  to determine $x$ for a given 4--manifold.
A key tool for finding lower bounds on $x$ comes from the adjunction inequality (see \cite{MST97}),
which states that when $M$ has $b_{+}(M) > 1$, if $\Sigma\subset M$ is a connected embedded surface  with $g(\Sigma) > 0$ and non--negative intersection number,
then
\begin{equation} \label{usual}  \chi_-(\Sigma) \geq [\S] \cdot [\S] + \kappa \cdot [\Sigma]. \end{equation}
Here $\kappa \in H^2(M)$ is a Seiberg--Witten \textit{basic class} of $M$, i.e. the Chern class of a $\spinc$--structure for which the Seiberg--Witten invariant is nontrivial.

It is natural to ask whether the inequality of Equation (\ref{usual}) gives the best possible bound on complexity, and if basic classes determine the complexity function of a $4$--manifold. In this regard it is instructive to look at the comparable situation for 3--manifolds.
Given a 3--manifold $N$ and a class $\sigma\in H_2(N)$ we
 define the Thurston norm of $\sigma$ (see \cite{Th86}) to be
\[ \|\sigma\|_T := \mbox{min} \{ \chi_{-}(\Sigma)\, |\, \Sigma \subset N\mbox{ embedded surface which represents } \sigma\}. \]
The $3$--dimensional Seiberg--Witten basic classes give again a lower bound on the complexity
of surfaces. More precisely under the assumption that $b_1(N)>1$ and $\sigma\in H_2(N)$ the adjunction inequality for 3--manifolds states that
\begin{equation} \label{equ:auckly} \|\sigma\|_T \geq \kappa \cdot \sigma, \end{equation} where $\kappa \in H^2(N)$ is a Seiberg--Witten \textit{basic class} of $N$ (see \cite{Kr98} or \cite{Au96} for details).

It is well known that in general there exists no basic class which turns  (\ref{equ:auckly}) into an equality.
But an equality can be obtained, at least for a large class of $3$--manifolds, using monopole classes. More precisely Kronheimer and Mrowka \cite{KM97} showed that if $N$ is irreducible
given any $\sigma\in H_2(N)$ there exists a Seiberg--Witten  \emph{monopole class} $\kappa$ of $N$ such that
\[  \|\sigma\|_T = \kappa \cdot \sigma.\]
By the respective definitions, a Seiberg--Witten basic class is also a monopole class, but the converse does in general not hold, and it is easy to give examples where the inequality of (\ref{equ:auckly}) is strict.

We now return to the study of 4--manifolds. The above discussion suggests that the adjunction inequality for 4--manifolds  in general is not sharp.
This is indeed the case, but it is not easy to pin down
examples where this occurs.
 The most relevant instance in this sense comes from Kronheimer's  refined adjunction inequality for manifolds of the form $S^1 \times N^3$. More precisely, Kronheimer \cite[Corollary~7.6]{Kr98} \cite{Kr99} proved that if $N$ is an irreducible
 closed 3--manifold such that the Thurston norm does not identically vanish, then
given any $\a\in H_2(S^1\times N)$
the following inequality holds:
\be \label{equ:kronheimer}  x(\a)\geq |\alpha\cdot \a|+\|p_*\a\|_T,\ee
where $p\colon S^1\times N \to N$ denotes the projection map.
It is easy to find examples where this refined adjunction inequality becomes an equality
while the adjunction inequality in (\ref{usual}) is strict. A second type of results, more closely connected with the approach of this paper, is contained in \cite{FV09}, where the authors used
 Seiberg Witten invariants of finite covers to  improve the adjunction inequality for manifolds $M$ that admit a free circle action.

\subsection{Complexity of surfaces in 4--manifolds with a free $S^1$--action}

In this paper we will show that in most cases the Seiberg--Witten invariants
of finite covers contain enough information to recover Kronheimer's inequality for closed irreducible $3$--manifolds (with the exclusion of non--npc graph manifolds, i.e. graph manifolds that do not admit a nonpositively curved metric), without any assumption on the Thurston norm of $N$. Moreover, we will extend that inequality for circle bundles  $p\colon M \to N$  with Euler class $e \in H^{2}(N) \setminus \Xi_N$, where  $\Xi_N$ is a \emph{finite} subset (described in Section \ref{section:defxn}) determined by the Thurston norm of $N$, that does not contain any torsion elements.
Precisely, the main result of this paper is then the following:

\begin{theorem} \label{main} \label{mainthm} Let $N$ be a closed irreducible $3$--manifold and assume $N$ is not a non--npc graph manifold. Let $p\colon M \to N$ be the  circle bundle with Euler class $e \in H^{2}(N) \setminus \Xi_N$. Then for any $\a\in H_2(M)$ we have
  \begin{equation} \label{refadj}  x(\a) \geq |\a \cdot \a| +  \| p_{*}\a\|_T. \end{equation}

  \end{theorem}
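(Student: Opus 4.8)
The plan is to reduce the inequality to information extracted from Seiberg--Witten invariants of finite cyclic covers of $M$, in the spirit of \cite{FV09}, and then to match the resulting lower bound with Kronheimer's inequality for $S^1\times N$. First I would reduce to the case $\a\cdot\a=0$: since $M$ fibers over $N$ with circle fibers, the vertical class $[F]=p^{!}[\pt]$ satisfies $[F]\cdot[F]=0$ and $p_*[F]=0$, and for a general class $\a$ one can add a multiple of $[F]$ and blow up/tube to reduce the self-intersection, or more efficiently argue that the self-intersection term $|\a\cdot\a|$ comes for free from the ordinary adjunction inequality applied after passing to a cover where $b_+>1$; the genuinely new content is the term $\|p_*\a\|_T$, so I would focus on showing $x(\a)\ge \|p_*\a\|_T$ when $\a\cdot\a=0$.

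Next, for each $d\ge 1$ let $M_d\to M$ be the pullback of the $d$--fold cyclic cover $N_d\to N$ that unwinds $H^1(N;\Z)$ appropriately along the direction dual to $p_*\a$ (equivalently, $M_d$ is the circle bundle over $N_d$ with Euler class the pullback of $e$, and $M_d\to M$ is a $d$--fold cover). An embedded surface $\Sigma\subset M$ representing $\a$ with $\chi_-(\Sigma)=x(\a)$ pulls back to $\Sigma_d\subset M_d$ representing $\a_d$ with $\chi_-(\Sigma_d)\le d\,\chi_-(\Sigma)$ (equality on the components that lift connectedly, but in any case $\le$), so $x(\a_d)\le d\,x(\a)$. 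On the other hand, because $N$ is irreducible and not a non--npc graph manifold, $N_d$ has vanishing Thurston norm only in degenerate cases we have excluded via $\Xi_N$; the hypothesis $e\notin\Xi_N$ is precisely what guarantees that for all large $d$ the cover $M_d$ has nontrivial Seiberg--Witten invariants with a basic class $\kappa_d$ satisfying $\kappa_d\cdot\a_d \ge d\,\|p_*\a\|_T - C$ for a constant $C$ independent of $d$ — this is where one invokes the relation between Seiberg--Witten invariants of $S^1$--bundles, the Alexander norm / twisted Alexander polynomials, and the Thurston norm of $N$, together with the fact (Agol, Przytycki--Wise, Liu, etc., packaged in the excluded graph-manifold case) that fiberedness of the relevant cohomology classes is detected virtually. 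Applying the ordinary adjunction inequality \eqref{usual} in $M_d$ (valid once $b_+(M_d)>1$, which holds for $d$ large) gives $d\,x(\a)\ge x(\a_d)\ge \kappa_d\cdot\a_d\ge d\,\|p_*\a\|_T - C$, and dividing by $d$ and letting $d\to\infty$ yields $x(\a)\ge\|p_*\a\|_T$.

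Combining the two terms: I would run the argument with basic classes that simultaneously see the self-intersection, i.e. arrange $\kappa_d$ so that $\kappa_d\cdot\a_d + \a_d\cdot\a_d$ grows like $d(|\a\cdot\a| + \|p_*\a\|_T)$, using that the set of basic classes of $M_d$ is symmetric and that for a class $\a$ with $\a\cdot\a\neq 0$ one chooses the sign of $\kappa_d$ appropriately and absorbs the $\a\cdot\a$ contribution through the blow-up formula applied to $M_d\#\overline{\C P}^2$ as in the standard proof of the adjunction inequality. The main obstacle, and the technical heart of the paper, is the second step: producing, for all but finitely many Euler classes and all large $d$, a Seiberg--Witten basic class of $M_d$ whose pairing with $\a_d$ recovers $d\|p_*\a\|_T$ up to a bounded error. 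This requires a careful analysis of how the Thurston norm of $N$ (equivalently, the top-degree behavior of twisted Alexander polynomials) interacts with the Euler class $e$ under passage to finite covers, and the definition of $\Xi_N$ must be exactly the set of $e$ for which this comparison fails; controlling this finite exceptional set, and verifying it contains no torsion classes, is the delicate point that the excluded non--npc graph manifold hypothesis and the virtual fibering theorems are there to handle.
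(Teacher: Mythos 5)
Your general strategy (pass to finite covers, extract Seiberg--Witten basic classes, apply the ordinary adjunction inequality and then descend) is indeed the right framework, and you have correctly identified that the role of $\Xi_N$ must be to guarantee nonvanishing of four-dimensional Seiberg--Witten invariants on a cover. But the specific mechanisms you propose diverge from the paper's proof in ways that leave genuine gaps.

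First, the paper does not use a family of cyclic covers $N_d$ with $d\to\infty$ and a bounded-error limit argument. It uses Agol's virtual fibering theorem to produce a \emph{single} finite cover $\pi\colon\wti N\to N$ with the property that every class in $H_2(\wti N)$ is quasifibered (Lemma \ref{lem:agol}); this is not a cyclic cover ``dual to $p_*\a$,'' and indeed cyclic covers would not give quasifiberedness in general. With that one cover in hand, Corollary \ref{cor:convhull} identifies the dual Thurston norm ball $B_N^*$ exactly as the convex hull of $\frac{1}{\deg\pi}\pi_*$ of the Chern classes supporting $SW_{\wti N}$, so there is no asymptotic argument and no error term $C$. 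Second, the paper does not reduce to $\a\cdot\a=0$: the self-intersection term and the Thurston norm term are handled simultaneously through Theorem \ref{thm:adjs1}, which shows $x(\a)\ge|\a\cdot\a|+\kappa\cdot\a$ for any basic class $\kappa$ on the total space by splitting $\Sigma$ into components and applying the adjunction inequality on $\pm M$ depending on the sign of each component's self-intersection. Your sketch of ``adding multiples of $[F]$'' or ``blowing up'' to kill $\a\cdot\a$ is not justified and is not how one would proceed.

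The most important missing idea, though, is the convexity argument that defines $\Xi_N$ and proves the crucial Claim. By Baldridge's formula, $SW_{\wti M}(\wti p^*\cc)=\sum_{l\in\Z}SW_{\wti N}(\cc+l\wti e)$, and this averaging can annihilate individual nonzero terms. The proof must rule out the possibility that \emph{all} the three-dimensional basic classes of $\wti N$ lying on the relevant face of the dual norm ball are killed by this averaging. The paper shows that if that happened, then for every vertex $v$ of the face of $B_N^*$ dual to $\sigma=p_*\a$ there would be a nonzero integer $l$ with $v+2l\cdot\frac{1}{k}\pi_*\wti e$ still on that face; the polyhedral Observation then forces $\frac{1}{k}\pi_*\wti e\in\mathcal E_N$, i.e.\ $e\in\Xi_N$, contradicting the hypothesis. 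This is the technical heart of the proof and the precise reason $\Xi_N$ is defined the way it is (via edges of $B_N^*$); your proposal acknowledges that ``producing a Seiberg--Witten basic class of $M_d$'' is the hard step and that ``$\Xi_N$ must be exactly the set of $e$ for which this comparison fails,'' but offers no mechanism for why this is a finite set or how nonvanishing is actually established. Without the dual-polyhedron argument and Baldridge's formula, the proof does not close.
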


\begin{remark}
1. This theorem was proved by Kronheimer \cite{Kr99} for the case $e=0$, i.e. $M=S^1\times N$, under the assumptions mentioned  in the previous section.\\
2.  If the intersection number $\a\cdot \a$ is zero, then the conclusion of the theorem,  without any restrictions on $N$, also follows immediately from the following result of  Gabai \cite[Corollary~6.18]{Ga83}:
\[ \|\sigma\|_T= \mbox{min} \{ \chi_{-}(\Sigma)\, |\, \Sigma \subset N\mbox{ \emph{immersed} surface which represents } \sigma\}. \]
\end{remark}

In fact, in this paper we will show in Lemma \ref{above} that the conclusion of Theorem \ref{main} hold for all closed irreducible $3$--manifold whose fundamental group is virtually RFRS, i.e. admits a finite index subgroup which is RFRS. This assumption, introduced by Agol in \cite{Ag08} (and discussed in Section \ref{section:agol}) is the key topological condition which allows us to get a handle on Seiberg--Witten invariants  of finite covers of $N$ via Agol's virtual fibering theorem. Theorem \ref{main} is then consequence of this result and the following

\begin{proposition} \label{prop:liuconj}
Let $N$ be a closed irreducible 3--manifold, then $\pi_1(N)$ is virtually RFRS, unless $N$ is a non--npc graph manifold.
\end{proposition}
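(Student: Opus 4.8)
The plan is to run through the geometric decomposition of $N$ and assemble the cubulation results for $3$--manifold groups that have become available. The central reduction, essentially contained in \cite{Ag08}, is that every \emph{virtually compact special} group is virtually RFRS: Agol proved that right--angled Artin groups, and finitely generated free abelian groups, are RFRS; by Haglund--Wise the fundamental group of a compact special cube complex embeds in a right--angled Artin group; and the RFRS property passes to subgroups, since one may intersect a defining cofinal chain of finite--index normal subgroups with any given subgroup and the induced quotient maps still factor through the abelianization modulo torsion. So it will be enough to prove that $\pi_1(N)$ is virtually compact special whenever $N$ is closed, irreducible, and not a non--npc graph manifold --- together with a direct treatment of a few ``small'' geometric cases.

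First I would dispose of those small cases. If $\pi_1(N)$ is finite it is trivially virtually RFRS; if $N$ is flat then $\pi_1(N)$ is virtually $\Z^3$, which is RFRS by \cite{Ag08}; and if $N$ is Seifert fibered over a hyperbolic $2$--orbifold with vanishing Euler number, then $N$ is finitely covered by a product $S^1\times\Sigma$ of a circle with a closed hyperbolic surface, whose fundamental group is virtually compact special (a product of special cube complexes is special) and hence virtually RFRS. The only remaining closed Seifert fibered manifolds are those with Nil or $\widetilde{SL}_2$ geometry, and these are non--npc graph manifolds, excluded by hypothesis; it is worth noting that such manifolds genuinely fail to be virtually RFRS, because the regular fiber lies in the commutator subgroup of $\pi_1$ of every finite cover and so persists in every term of a putative RFRS chain.

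It then remains to treat irreducible $N$ with nontrivial geometric decomposition, which I would split according to whether some JSJ piece is hyperbolic. If $N$ is closed hyperbolic, or if its JSJ decomposition is nontrivial with at least one hyperbolic piece, then $\pi_1(N)$ is virtually compact special: this is Agol's theorem in the closed hyperbolic case (building on Wise's theory of quasiconvex hierarchies, the surface subgroup theorem of Kahn--Markovic, and the cubulation theorem of Bergeron--Wise), and follows from the theorem of Przytycki--Wise on mixed $3$--manifolds, together with Wise's hierarchy machinery, when the JSJ decomposition is nontrivial with a hyperbolic piece. If instead every JSJ piece is Seifert fibered, so that $N$ is a graph manifold, then the hypothesis forces $N$ to be npc, and $\pi_1(N)$ is virtually compact special by the theorem of Liu (with complementary results of Przytycki--Wise) that nonpositively curved graph manifolds are virtually special. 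Together with the small cases this exhausts all closed irreducible $N$ outside the class of non--npc graph manifolds, and the proposition follows.

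I expect the main obstacle to be this last, graph--manifold, case: it rests on the delicate virtual cubulation theorems for npc graph manifolds, and it is precisely here that the ``npc'' hypothesis is indispensable, since a non--npc graph manifold admits no cubulation of $\pi_1(N)$ with the required separability properties --- and, as indicated above, in the pure Seifert and Sol subcases one can exhibit the failure of virtual RFRS by hand. A more routine but still delicate point is the reduction in the first paragraph, where one must verify that the passage ``virtually special $\Rightarrow$ subgroup of a right--angled Artin group $\Rightarrow$ virtually RFRS'' is compatible with taking finite--index subgroups; this holds because RFRS descends to arbitrary subgroups.
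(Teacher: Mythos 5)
Your proposal is correct and takes essentially the same route as the paper: geometrization, then the virtual specialness results of Wise--Agol (hyperbolic), Przytycki--Wise (mixed), and Liu (npc graph manifolds), followed by the Haglund--Wise embedding into a RAAG and Agol's observation that this yields virtual RFRS. The paper states this as a citation chain rather than spelling out the reduction and the small Seifert cases as you do, but the underlying argument is the same.
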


Proposition \ref{prop:liuconj} combines work of Wise  (\cite{Wi09}, \cite[Section~14]{Wi12} and \cite[Section~15.2]{Wi11}) and Agol (\cite{Ag12}) for the case of hyperbolic manifolds, Liu  (\cite{Li11}) for the case of graph manifolds, and finally Przytycki--Wise (\cite{PW12}) for the case of mixed manifolds, where the authors prove that the fundamental group of the manifolds under discussion are
\textit{virtually special}.  In turn, this condition (whose precise definition is of no concern to us) implies by \cite{HW08} and  \cite[Corollary~2.3]{Ag08} that the fundamental group is virtually RFRS. It is worth mentioning that partial results in this direction had been obtained in Bergeron and Wise \cite{BW09} and Chesebro, Deblois and Wilton \cite{CDW09}.

\subsection{Complexity in $S^1$--bundles over nice 3--manifolds}\label{section:nice}

We now investigate some conditions under which the refined adjunction inequality of Theorem \ref{main} is sharp, i.e. the complexity function is determined by the Thurston norm of $N$. In the same vein, we examine whether the complexity function exhibits properties that correspond to those of the Thurston norm.

\begin{definition}
Let $N$ be a closed irreducible 3--manifold and assume $N$ is not a non--npc graph manifold. Denote $H:=H_1(N;\Z)/\mbox{torsion}$.
 We say $N$ is \emph{nice} if its multivariable Alexander polynomial $\Delta_N=\sum_{h\in H}a_h\cdot h\in \Z[H]$
 (see e.g. \cite{FV10} for the definition) has the property that
 $\phi(\Delta_N):=\sum_{h\in H}a_h\cdot t^{\phi(h)}\in \zt$ is non--zero for any epimorphism $\phi:H\to \Z=\ll t\rr$.
\end{definition}

It is easy to see that a `generic' non--zero polynomial $\Delta\in \Z[H]$
has the property that all possible one--variable specializations $\phi(\Delta_{N})$ are non--zero.
We refer to Section \ref{section:sharp} for details.

The following result, which is inspired by \cite[Corollary~2]{Kr99} in the product case, follows immediately from Theorem \ref{main} and Lemmas
 \ref{lem:starmin} and \ref{lem:getstar}.
This corollary completely determines the complexity function of all but finitely $S^1$--bundles over nice 3--manifolds.

\begin{corollary} \label{cor:mingen}
Let $p\colon M \to N$ be an $S^1$--bundle over a nice 3--manifold $N$.
If $e\not\in \Xi_N$, then  for any  $\a \in H_2(M)$ the following equality holds:
\[  x(\a) = |\a \cdot \a| +  \| p_{*}\a\|_T.\]
 \end{corollary}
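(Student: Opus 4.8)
The plan is to deduce Corollary~\ref{cor:mingen} from Theorem~\ref{main} by establishing the reverse inequality $x(\a)\le |\a\cdot\a|+\|p_*\a\|_T$ for every $\a\in H_2(M)$, under the hypothesis that $N$ is nice and $e\notin\Xi_N$. Since Theorem~\ref{main} already gives $x(\a)\ge |\a\cdot\a|+\|p_*\a\|_T$, the two bounds combine to the stated equality. The upper bound is an explicit surface-construction problem: given $\a$, one must exhibit an embedded surface $\S\subset M$ representing $\a$ with $\chi_-(\S)$ no larger than $|\a\cdot\a|+\|p_*\a\|_T$.

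The construction I would carry out is the one the paper signals via Lemmas~\ref{lem:starmin} and \ref{lem:getstar}: decompose $\a$ according to the circle fibration $p\colon M\to N$. Writing $\sigma=p_*\a\in H_2(N)$ and letting $[F]\in H_2(M)$ denote the class of a fiber torus over a curve (or more precisely using the Gysin sequence for $p$), one has $\a = \wti\sigma + k[\text{fiber part}]$, where $\wti\sigma$ is a section-type lift of a Thurston-norm-minimizing surface $\Sigma_0\subset N$ for $\sigma$ and the remaining summand accounts for the self-intersection. First I would take a norm-minimizing embedded surface $\Sigma_0\subset N$ with $\chi_-(\Sigma_0)=\|\sigma\|_T$; its preimage $p^{-1}(\Sigma_0)$ is an embedded surface in $M$ that is an $S^1$--bundle over $\Sigma_0$, hence has $\chi_-=0$ when $\Sigma_0$ has genus, and carries a natural class. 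Then, to realize the full class $\a$ and in particular produce the correct self-intersection number, I would tube in $|\a\cdot\a|$ parallel copies of a torus or do a connected-sum/handle-addition with a surface of the appropriate genus, carefully tracking how each operation changes $\chi_-$ and the homology class. The ``nice'' hypothesis enters precisely here: the non-vanishing of all one-variable specializations $\phi(\Delta_N)$ guarantees (via the machinery relating the Alexander polynomial to fibered classes / to the existence of the required embedded section-like surface) that no correction term is lost, i.e.\ that the naive upper bound $|\a\cdot\a|+\|p_*\a\|_T$ is actually attained rather than merely approached.

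The main obstacle I anticipate is the bookkeeping of self-intersection versus complexity: increasing the self-intersection of an embedded surface in a $4$--manifold by tubing generically forces an increase in genus, and one must verify that each unit of $|\a\cdot\a|$ costs \emph{exactly} one unit of $\chi_-$ (one extra handle), with no hidden slack coming from the fact that $p_*\a$ might be a multiple class or that $\Sigma_0$ might be disconnected. A second, more subtle point is that the construction must respect the condition $e\notin\Xi_N$ and the niceness of $N$ — these are the hypotheses that make the relevant surface in $N$ ``liftable'' to a surface in $M$ of controlled topology; I would check that the argument of Lemma~\ref{lem:getstar} supplies exactly the surface needed and that Lemma~\ref{lem:starmin} certifies its minimality among the upper-bound candidates. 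Once these two lemmas are in hand the corollary is immediate: sandwich $x(\a)$ between the lower bound of Theorem~\ref{main} and the upper bound produced by the explicit surface, and conclude equality.
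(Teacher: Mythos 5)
Your overall strategy is exactly the paper's: obtain the lower bound from Theorem~\ref{main}, obtain the upper bound by exhibiting an explicit surface via Lemmas~\ref{lem:getstar} and~\ref{lem:starmin}, and sandwich. However, two details in your sketch are off and worth correcting. First, you describe ``the preimage $p^{-1}(\Sigma_0)$'' as an embedded surface in $M$; it is not --- the preimage of a surface under an $S^1$--bundle map is a $3$--manifold. The construction behind Property~$(*)$ instead uses a \emph{lift} $\widetilde{\Sigma}$ of $\Sigma$ to $M$ (a section of the restricted bundle $p^{-1}(\Sigma)\to\Sigma$, which exists because $e\cdot p_*\a=0$ by the Gysin sequence) together with the preimage $p^{-1}(c)$ of a \emph{curve} $c\subset N$, which is a union of tori. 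One then smooths the transverse double points of $\widetilde{\Sigma}\cup p^{-1}(c)$ by replacing each pair of transverse disks with an annulus; each such replacement increases $\chi_-$ by $2$, and since $\a\cdot\a = 2\,\Sigma\cdot c$ the total cost is exactly $|\a\cdot\a|$. Second, you suggest that both hypotheses ($e\notin\Xi_N$ and niceness) are needed for the surface construction; in fact they decouple cleanly. The condition $e\notin\Xi_N$ is used \emph{only} in Theorem~\ref{main} to get the lower bound. Niceness is used \emph{only} in Lemma~\ref{lem:getstar}: the non--vanishing of $\sigma(\Delta_N)$ forces, via McMullen's theorem, the existence of a \emph{connected} norm--minimizing surface dual to the primitive class $\frac1n\sigma$, and connectedness is what lets one choose $c$ so that the geometric intersection number with $\Sigma$ equals the absolute value of the algebraic one (no cancelling intersections), which is the crux of Property~$(*)$. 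With these corrections the argument is complete and matches the paper.
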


Before we continue in our discussion we want to recall that the Thurston norm has the following properties
(see \cite{Th86} and \cite{Ga83}):
\bn
\item linearity on rays,
\item triangle inequality,
\item multiplicativity under finite covers, i.e.  given a finite cover $\pi \colon {\wti N} \to N$ and $\sigma\in H_2(N)$ we have
 \[ \| \pi^{*} \sigma \|_T = {\mbox{deg} \, \pi} \| \sigma \|_T,\]
 where $\pi^{*} \colon H_2(N) \to H_{2}({\wti N})$ is the transfer map in homology.
\en
Note that  (1) and (2) imply that the Thurston norm on $H_2(N;\Z)$ extends to a   (semi) norm on  $H_2(N;\R)$.
We now consider
the Thurston norm unit ball
\[ B_{N}:=\{\sigma \in H_2(N;\R)\,|\,\,  \|\sigma\|_T\leq 1\} \subset H_2(N;\R).\]
By \cite{Th86} this  is a finite, convex, rational polyhedron.
We say $\sigma\in H_2(N;\R)$ is a \emph{fibered class} if there exists a non--degenerate closed 1--form $\psi$ on $N$
such that $\sigma=PD([\psi])$.
Thurston \cite{Th86} proved the following relationship between the Thurston norm ball and fibered classes:
\bn
\item[(4)] A fibered class $\sigma$ lies in the cone on an open top dimensional face $F$ of $B_N$ and  any other class in the open cone on $F$ is also fibered.
\en
It is a natural question whether the complexity function $x$ on 4--manifolds exhibits similar naturality properties.
In fact, Corollary \ref{cor:mingen} asserts that for most $S^1$--bundles over a nice 3--manifold
the function
\[ x'(\a):=x(\a)-|\a\cdot \a|\]
satisfies Properties (1) and (2), i.e. $x'$ is linear on rays and  it satisfies the triangle inequality.

Regarding Property (3), we can prove the following theorem, that provides a family of manifolds that satisfy the properties requested in Question 7.8
 of \cite{Kr98}. In the statement we are constrained to exclude, this time, an infinite set of choices $\Theta_{N} \subset H^{2}(N)$ for the Euler class. This set is given by classes which are multiples, in rational cohomology, of elements in $\Xi_{N}$.

\begin{theorem} \label{kronansintro} Let $N$ be a nice $3$-manifold.
Let  $p \colon  M \to N$ be a circle bundle of Euler class $e \in H^2(N) \setminus \Theta_{N}$. Then the minimal complexity function $ x$ is multiplicative under finite covers.  \end{theorem}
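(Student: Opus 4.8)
The plan is to reduce the multiplicativity of $x$ to that of the Thurston norm (Property~(3) above), using Corollary~\ref{cor:mingen} to identify $x'(\a) = x(\a) - |\a\cdot\a|$ with $\|p_*\a\|_T$ whenever the Euler class lies outside $\Xi_N$. The key point is that the class of circle bundles with Euler class outside $\Xi_N$ is stable under passing to finite covers \emph{in a controlled way}: if $\pi\colon \wti N \to N$ is a finite cover and $e \notin \Theta_N$, then the pullback Euler class $\pi^* e \in H^2(\wti N)$ should again avoid the relevant excluded set $\Xi_{\wti N}$. Here is where the set $\Theta_N$ of ``rational multiples of $\Xi_N$'' enters: one must arrange that $e \notin \Theta_N$ forces $\pi^* e \notin \Xi_{\wti N}$, because in general $\pi^* e$ can be divisible in $H^2(\wti N)$ even when $e$ is primitive. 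I expect the definition of $\Xi_N$ in Section~\ref{section:defxn} to be given in terms of the faces of the Thurston norm ball (or the Alexander polynomial), so that the transfer/pullback compatibility of the Thurston norm under $\pi$ translates directly into a statement relating $\Xi_{\wti N}$ to $\Xi_N$; the condition $e \notin \Theta_N$ is precisely what is needed to make this translation go through for all finite covers simultaneously.

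The main steps, in order, are as follows. First, fix a finite cover $q\colon \wti M \to M$ of degree $d$; it is induced by a finite cover $\pi\colon \wti N \to N$ of the same degree, and $\wti M \to \wti N$ is the circle bundle with Euler class $\pi^* e$. Second, show that $\wti N$ is again a closed irreducible $3$--manifold which is not a non--npc graph manifold and is still nice: irreducibility of finite covers of irreducible $3$--manifolds is standard (sphere theorem / equivariant sphere theorem), the graph manifold / non-npc property is inherited by covers, and niceness of $\wti N$ follows from niceness of $N$ together with the behaviour of the multivariable Alexander polynomial under finite covers (a specialization of $\Delta_{\wti N}$ that vanishes would pull back to a vanishing specialization of $\Delta_N$, up to a power-of-$t$ factor). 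Third, verify $\pi^* e \notin \Xi_{\wti N}$ using the hypothesis $e \notin \Theta_N$ and the description of $\Xi$ in Section~\ref{section:defxn}. Fourth, apply Corollary~\ref{cor:mingen} to both $M$ and $\wti M$: for $\a \in H_2(M)$ and its transfer $q^*\a \in H_2(\wti M)$,
\[
x(q^*\a) = |q^*\a \cdot q^*\a| + \|(\pi^* )_* \circ p_* \a\|_T \qquad\text{(after identifying } p\circ q \text{ with } \pi\circ q').
\]
Fifth, compute: the self-intersection scales by the degree, $q^*\a \cdot q^*\a = d\,(\a\cdot\a)$, because the intersection form pulls back and $q$ has degree $d$; and the commuting square of bundle projections and covers gives $(p\circ q)_* q^*\a = \pi^* (p_*\a)$ in $H_2(\wti N)$, whence $\|(p\circ q)_* q^*\a\|_T = \|\pi^*(p_*\a)\|_T = d\,\|p_*\a\|_T$ by Property~(3). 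Combining, $x(q^*\a) = d\,|\a\cdot\a| + d\,\|p_*\a\|_T = d\, x(\a)$, which is the asserted multiplicativity.

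The hard part will be Step~three: establishing that $e \notin \Theta_N$ implies $\pi^* e \notin \Xi_{\wti N}$ for \emph{every} finite cover $\pi$. This is not formal, because the finite set $\Xi_{\wti N}$ depends on $\wti N$, which ranges over infinitely many covers, and the pullback map $\pi^*$ on $H^2$ need not be injective onto a ``primitive'' sublattice; a primitive class upstairs may pull back from a non-primitive class, or a bundle with small Euler class upstairs may not come from one downstairs at all. The resolution is to check that the excluded sets $\Xi_{\wti N}$ are, by their very construction in Section~\ref{section:defxn} (faces of the Thurston norm ball, or support of the Alexander polynomial, together with torsion), compatible with pullback: a class $\wti e \in \Xi_{\wti N}$ that equals $\pi^* e$ for some $e$ must have $e$ lying in the set of \emph{rational} multiples of elements of $\Xi_N$, which is exactly $\Theta_N$. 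Thus avoiding $\Theta_N$ downstairs is the correct, and minimal, hypothesis. I would also need to double-check the compatibility of transfer maps with the bundle projections in the commuting square $\wti M \to M$, $\wti M \to \wti N$, $M \to N$, $\wti N \to N$ — this is a routine diagram chase but worth stating carefully, since the whole argument hinges on $(p\circ q)_* q^* = \pi^* p_*$ on $H_2$.
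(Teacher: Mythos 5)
Your proposal takes a genuinely different route from the paper, and the route has a gap you yourself flag as the ``hard part'' but do not close. You want to apply Corollary~\ref{cor:mingen} directly to the cover $\wti M\to\wti N$, which requires two things you leave unestablished: that $\wti N$ is again nice, and that $\pi^* e\notin\Xi_{\wti N}$. Neither is clear. The Thurston norm ball of a finite cover $\wti N$ typically has much higher $b_1$ and many more faces than $B_N$, so $\Xi_{\wti N}$ is a finite set living in a much larger cohomology group with no apparent containment relation to $\pi^*\Xi_N$ or $\pi^*\Theta_N$; your claimed implication ``$\pi^* e\in\Xi_{\wti N}\Rightarrow e\in\Theta_N$'' would require a pullback-compatibility of dual unit balls that you assert but do not argue, and I see no reason it should hold. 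Similarly, the stability of niceness under covers is a real question about multivariable Alexander polynomials of covers, not an ``up to a power of $t$'' formality.

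The paper's proof (Theorem~\ref{kronans}) sidesteps both of these issues entirely. Given any finite cover $\rho_M\colon\widehat M\to M$, it does \emph{not} try to control $\Xi_{\widehat N}$. Instead it passes to a further cover $\tau\colon\wti N\to\widehat N$ supplied by Lemma~\ref{lem:agol}, and then re-runs the argument of Lemma~\ref{above} for the composite cover $\pi_M\colon\wti M\to M$. Crucially, that argument only ever needs a condition living in $H^2(N;\R)$ of the \emph{base} manifold, namely that $\tfrac{1}{\deg\pi_N}\pi_{N*}\wti e\notin\mathcal E_N$, where $\wti e$ is the Euler class upstairs. Since $q\wti e=\pi_N^* e$ and so $\tfrac{1}{\deg\pi_N}\pi_{N*}\wti e=\tfrac1q e$, the hypothesis $e\notin\Theta_N=\R_+\cdot\mathcal E_N$ (a condition about $N$ alone) immediately delivers this for every cover and every $q$. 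This yields the lower bound $x(\wti\a)\geq\deg\pi_M\,(|\a\cdot\a|+\|p_*\a\|_T)$. Then the sandwich
\[
x(\wti\a)\le\deg\tau\cdot x(\widehat\a)\le\deg\pi_M\cdot x(\a)=\deg\pi_M\,\bigl(|\a\cdot\a|+\|p_*\a\|_T\bigr),
\]
where the last equality is Corollary~\ref{cor:mingen} applied to $M$ only, forces all inequalities to be equalities, in particular $x(\widehat\a)=\deg\rho_M\cdot x(\a)$. Notice the asymmetry: only a \emph{lower} bound is needed upstairs and only an \emph{equality} downstairs; no niceness or $\Xi$-avoidance is ever required of any cover. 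You should replace Step~three with this intermediate-cover-plus-sandwich argument rather than trying to prove the pullback stability of $\Xi$, which is likely false and in any case unnecessary.
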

We thus see that in many cases $x'$ is indeed well behaved.

Regarding Property (4), it is well--known that symplectic classes of 4--manifolds in many ways play the r\^ole of
fibered classes on 3--manifolds. Here we say that $\a\in H_2(M;\R)$ is \emph{symplectic} if there exists a symplectic form $\w$ on $M$ such that $\a=PD([\w])$.
In \cite{FV12a,FV12c} we showed that a class $\a\in H_2(M;\R)$  is symplectic if and only if it is contained in the positive cone of $ H_2(M;\R)$ and $p_{*} \a\in H_2(N;\R)$ is contained in the fibered cone of $N$ (in particular, $N$ fibers over $S^1$). Combining this with Corollary \ref{cor:mingen} we get the following.

\begin{theorem}\label{thm:symplecticcone}

Let $p\colon M \to N$ be an $S^1$--bundle over a nice 3--manifold.
If $e\not\in \Xi_N$, then $x'\colon H_2(M;\Z)\to \Z_{\geq 0}$ gives rise to a seminorm $x'$ on $H_2(M;\R)\to \R_{\geq 0}$, which has the following property:
If $\a\in H_2(M)$ is a symplectic class, then it lies on the cone over an open top dimensional face $F$ of the unit norm ball of $x'$,
and any other  class in the cone on $F$ is also  symplectic.
\end{theorem}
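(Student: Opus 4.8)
The plan is to reduce the statement to the Thurston norm of $N$ by means of Corollary \ref{cor:mingen}, and then to feed the outcome into Thurston's fibered face theorem together with the symplectic characterisation of \cite{FV12a,FV12c}.

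\emph{Step 1: the seminorm.} Since $N$ is nice and $e\notin\Xi_N$, Corollary \ref{cor:mingen} gives $x(\a)=|\a\cdot\a|+\|p_*\a\|_T$ for every $\a\in H_2(M;\Z)$, hence $x'(\a)=x(\a)-|\a\cdot\a|=\|p_*\a\|_T$. As $\|\cdot\|_T$ is a seminorm on $H_2(N;\R)$ (Properties (1) and (2) of the Thurston norm, see \cite{Th86}) and $p_*\colon H_2(M;\R)\to H_2(N;\R)$ is linear, the function $\a\mapsto\|p_*\a\|_T$ is a seminorm on $H_2(M;\R)$; it is homogeneous and satisfies the triangle inequality on the lattice $H_2(M;\Z)$, where it coincides with $x'$, so it is the unique seminorm extending $x'$. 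This proves the first assertion and identifies the unit ball $B_{x'}=\{\a\in H_2(M;\R)\ :\ x'(\a)\leq 1\}$ with the preimage $p_*^{-1}(B_N)$; since $B_N$ is a finite convex rational polyhedron and $p_*$ is linear, so is $B_{x'}$ (a ``cylinder'' over the slice of $B_N$ cut out by the image of $p_*$).

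\emph{Step 2: fibered cones become faces of $B_{x'}$.} Write $B_N=\bigcap_i\{\sigma\ :\ \langle\theta_i,\sigma\rangle\leq 1\}$ for the finitely many vertices $\theta_i$ of the dual Thurston ball; then $B_{x'}=\bigcap_i\{\a\ :\ \langle p^*\theta_i,\a\rangle\leq 1\}$. By Thurston's Property (4) the fibered classes of $N$ form a disjoint union of the open cones $C(F_0)$ over the top-dimensional fibered faces $F_0$ of $B_N$ (distinct open cones over faces are disjoint, even for adjacent faces, as they omit the common boundary). If $F_0=\{\sigma\in B_N\ :\ \langle\theta_{i_0},\sigma\rangle=1\}$ is a fibered face, then $p_*^{-1}\big(C(F_0)\big)$ is precisely the open cone over the face $F:=\{\a\in B_{x'}\ :\ \langle p^*\theta_{i_0},\a\rangle=1\}$ of $B_{x'}$. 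One has to check that $F$ is genuinely top-dimensional, i.e. that $p^*\theta_{i_0}\neq 0$ and that the inequality $\langle p^*\theta_{i_0},\cdot\rangle\leq 1$ is not redundant in the presentation of $B_{x'}$; for the fibered faces that actually occur this is witnessed by a symplectic class, whose projection lies both in the relevant $C(F_0)$ and in the image of $p_*$, and at which only the $i_0$-th inequality is tight.

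\emph{Step 3: bringing in symplectic forms.} By \cite{FV12a,FV12c}, a class $\a\in H_2(M;\R)$ is symplectic if and only if it lies in the positive cone of $H_2(M;\R)$ and $p_*\a$ lies in the fibered cone of $N$. Combined with Step 2, the symplectic classes form the intersection of the positive cone with a disjoint union of open cones over top-dimensional faces of $B_{x'}$. The point that requires care — and the step I expect to be the main obstacle — is to show that this intersection is again a union of open cones over top-dimensional faces of $B_{x'}$, i.e. that the positive-cone condition is ``all or nothing'' on each cone $p_*^{-1}\big(C(F_0)\big)$; here one must pin down the precise role of the positive cone in the characterisation of \cite{FV12a,FV12c}. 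I would handle this by analysing the intersection form of $M$ through the Gysin sequence of the circle bundle $p$: the kernel of $p_*\colon H_2(M;\R)\to H_2(N;\R)$ is a Lagrangian subspace, spanned by the tori $p^{-1}(\gamma)$ of trivial normal bundle, and with respect to the induced splitting of $H_2(M;\R)$ the intersection form pairs this Lagrangian against a lift of the image of $p_*$; tracking the positive-cone condition along this splitting should reduce it to data on $N$ that is constant along each fibered cone. Granting this, for any symplectic $\a$ the cone $p_*^{-1}\big(C(F_0)\big)$ containing it is the open cone over a top-dimensional face $F$ of $B_{x'}$ all of whose classes are symplectic, which is exactly the assertion.
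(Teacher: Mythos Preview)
Your overall strategy matches the paper's: the paper derives the theorem in one line by ``combining'' the characterisation from \cite{FV12a,FV12c} with Corollary~\ref{cor:mingen}, and your Steps~1--3 simply make this explicit. Steps~1 and~2 are correct and carry out what the paper leaves implicit.

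The obstacle you flag in Step~3, however, is genuine, and the resolution you sketch cannot succeed: the positive-cone condition $\a\cdot\a>0$ is \emph{not} constant on the open cones $p_*^{-1}\big(C(F_0)\big)$. In the product case $M=S^1\times N$, for instance, the K\"unneth splitting $H_2(M;\R)\cong H_1(N;\R)\oplus H_2(N;\R)$ gives $(a,\sigma)\cdot(a,\sigma)=2\,a\cdot\sigma$; if $\sigma$ lies in an open fibered cone $C(F_0)$ and $a\in H_1(N;\R)$ satisfies $a\cdot\sigma\leq 0$ (which is always possible), then $(a,\sigma)\in p_*^{-1}\big(C(F_0)\big)$ but $(a,\sigma)$ is not in the positive cone and hence, by the very criterion of \cite{FV12a,FV12c}, is not symplectic. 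The same phenomenon persists for nontorsion Euler class: as you yourself observe, $\ker p_*$ is isotropic and pairs nondegenerately with any complement, so each fibre $p_*^{-1}(\sigma)$ meets both components of the light-cone complement. The ``all or nothing'' claim you hoped to establish is therefore false, and no Gysin-sequence bookkeeping will rescue it.

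What this actually shows is that the theorem, read literally, does not follow from Corollary~\ref{cor:mingen} and \cite{FV12a,FV12c} alone; the paper's one-line derivation glosses over exactly the point you isolated. The statement is meant to be read in strict analogy with Thurston's Property~(4): a symplectic class lies in the open cone over some top-dimensional face $F$ of the $x'$-ball, and any other class in that cone \emph{which also lies in the positive cone} is symplectic. With that (intended) reading your argument is complete after Step~2 and the first sentence of Step~3, and you should drop the attempted reduction in your final paragraph.
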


\begin{remarks}
1. It is a very interesting question whether the results of this section hold for \textit{any}  $S^1$--bundle over nice 3--manifold. It is likely that the exclusion of certain Euler classes is only due to the limitations of our method. \\
2. The restriction to nice 3--manifolds is not optimal, but  the discussion of certain examples in Section \ref{section:sharp}  suggests that there may exist 3--manifolds $N$ such that equality in Corollary \ref{cor:mingen}
 does not even hold for $S^1\times N$, and similarly, that all the other theorems also fail to hold for such examples. In particular the examples suggest that $x'$ may not always well behaved.
\end{remarks}

In  \cite{FV12b} we use related ideas to show that twisted Alexander polynomials detect the Thurston norm of most irreducible 3--manifolds.

\subsection*{Acknowledgment.} We wish to thank Genevieve Walsh for a very helpful conversation.

\section{Preliminaries}\label{section:prelim}
\label{section2} We start by recalling some elementary facts about the algebraic topology of a
$4$--manifold $M$ that is the total space of a circle bundle $p \colon M \to N$ over a $3$--manifold with nontorsion Euler class $e \in H^2(N)$.
The Gysin sequence (with either integer or real coefficients) reads
\[ \xymatrix{  H^{0}(N) \ar[d]^\cong \ar[r]^{\cup \hspace*{1pt} e}& H^2(N) \ar[d]^\cong \ar[r]^{p^{*}}&
H^2(M) \ar[d]^\cong \ar[r]^{p_{*}} &H^{1}(N) \ar[d]^\cong \ar[r]^{\cup \hspace*{1pt} e} &H^3(N)\ar[d]^\cong \\
H_3(N)\ar[r]^{\cap e}&H_1(N)\ar[r]&H_2(M)\ar[r]^{p_*}&H_2(N)\ar[r]^{\cap e}&H_0(N),}
\]
where $p_* \colon H^{2}(M) \to H^{1}(N)$ denotes integration along the fiber.
Note that, in particular, we have \[ 0 \to
\langle e \rangle \to H^2(N) \xrightarrow{p^{*}} H^2(M) \xrightarrow{p_{*}} \mbox{ker}( \cdot e)
\to 0, \] where we have denoted by $\langle e \rangle$ the cyclic subgroup of $H^2(N)$
generated by the Euler class and by $\mbox{ker}( \cdot e)$ the subgroup of $H^1(N)$ whose pairing
with the Euler class vanishes. It follows that $b_2(M)=2b_1(N) - 2$.
 It is not difficult to verify that $\operatorname{sign}(M) = 0$, hence $b_2^+(M)=b_1(N) - 1$.

We will also consider finite covers $\pi_{M}\colon {\wti M} \to M$ of $M$, and we collect here some results that will be of use. The manifold ${\wti M}$ carries a free circle action whose orbit space we denote by ${\wti N}$, so we have an $S^1$--bundle ${\wti p}\colon {\wti M} \to {\wti N}$. The latter $3$--manifold is itself a finite cover $\pi_{N}\colon {\wti N} \to N$, so that the $S^1$--bundle map and the covering map are related by the commutative diagram
\[ \label{diag:cover} \xymatrix{ {\wti M} \ar[d]^{\wti{p}}\ar[r]^{\pi_{M}} & M \ar[d]_p \\ {\wti N} \ar[r]^{\pi_{N}} & N,}. \]

Note that, in general, ${\wti p}\colon {\wti M} \to {\wti N}$ is \emph{not} the pull--back bundle under $\pi_{N}$ of $p\colon M \to N$. In fact, a fiber of ${\wti p}$ is the connected cover of degree $q$ of the fiber of $p$, where $q$ is the index of $\pi_{1}(S^1) \cap \pi_{1}(\wti M) \leq_{fi} \pi_{1}(S^1)$, here $\pi_{1}(S^1) \leq \pi_1(M)$ denotes the (central) subgroup carried by the fiber of $p\colon M \to N$. (The case of pull--back bundle corresponds to $q = 1$.) The degrees of $\pi_{M}$ and $\pi_{N}$ are then related by the formula $\mbox{deg} \, \pi_{M} = q \, \mbox{deg} \, \pi_{N}$.
The Euler class $\wti e \in  H^2({\wti N})$ of the $S^1$--bundle ${\wti p}\colon {\wti M} \to {\wti N}$ and the
Euler class $e \in H^{2}(N)$ of the $S^1$--bundle $p\colon M \to N$ are related by the equation (see e.g. \cite{Br94}) \be \label{equ:coverq} q  \wti{ e} = \pi^{*}_{N} e \in H^{2}({\wti N}). \ee
 As $\pi_{N}^{*} \colon  H^{2}(N;\Q) \to H^{2}({\wti N};\Q)$ is injective, ${\wti e}$ is nontorsion.

We will make use of the following inequalities. Given $\a \in H_{2}(M)$ and $\sigma \in H_{2}(N)$, we have
\be \label{equ:facts} \ba{rcl} x({\pi_{M}^{*} \a})&\leq &  \mbox{deg} \, \pi_{M} \cdot x(\a),\\
|{\pi_{M}^{*} \a} \cdot {\pi_{M}^{*} \a}|&=& \mbox{deg} \, \pi_{M} \cdot | \a \cdot \a |,\\
\| \pi_{N}^{*} \sigma\|_T&=& \mbox{deg} \, \pi_{N}\cdot  \|\sigma\|_T.\ea \ee
The first two statements are trivial and the third is  the multiplicativity of the Thurston norm under finite covers.

 Finally, we are interested in the Seiberg--Witten basic classes of $M$. Conveniently, we will be able to restrict the study of Seiberg--Witten invariants to the case where $b_{2}^{+}(M) > 1$, avoiding this way the technicalities that appear for $b_{2}^{+}(M) = 1$. Under that assumption, Baldridge has proven in \cite{Ba01} that the Seiberg--Witten invariants can be written in terms of $3$--dimensional Seiberg--Witten invariants as follows:

\begin{theorem} \label{bald} \label{thm:baldridge} \textbf{\emph{(Baldridge)}} Let $N$  be a closed $3$--manifold with $b_1(N) > 2$, and let $p\colon M \to N$ be the circle bundle with nontorsion Euler class $e \in H^2(N)$. Then the Seiberg-Witten invariant of the pull--back $p^{*} \cc \in \spinc(M)$ of a $\spinc$--structure $\cc  \in \spinc(N)$  is given by \[ SW_{M}(p^{*}\cc)
= \sum_{l\in \Z} SW_{N}(\cc+le) \in \Z \] and vanishes for all other  $\spinc$--structures.
\end{theorem}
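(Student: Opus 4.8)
The plan is to follow the standard dimensional reduction of the Seiberg--Witten equations under a free circle action, in the spirit of the analysis of Mrowka--Ozsv\'ath--Yu for Seifert fibred spaces. First I would observe that since $b_1(N)>2$ we have $b_2^+(M)=b_1(N)-1>1$, so $SW_M$ is a well defined diffeomorphism invariant, computable with any Riemannian metric and any generic perturbation. I would then equip $M$ with an $S^1$--invariant metric adapted to the bundle: fix a principal connection $1$--form $\theta$ on $p\colon M\to N$ whose curvature represents a multiple of the Euler class $e$, a metric $g_N$ on $N$, and set $g_M=p^*g_N+\theta\otimes\theta$, taking all perturbing $2$--forms to be $S^1$--invariant as well. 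The circle then acts by isometries on $M$, hence on the Seiberg--Witten configuration space and on the moduli spaces.

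Next I would pin down which $\spinc$--structures can carry solutions. Using that the $S^1$--orbits are closed geodesics of constant length, a Weitzenb\"ock-type argument in the fibre direction should force the vertical components of the connection and of the spinor of any solution to be covariantly constant along the fibre, so that every irreducible solution is gauge equivalent to an $S^1$--invariant one. An $S^1$--invariant configuration for a $\spinc$--structure $\mathfrak s$ on $M$ exists only when $\mathfrak s=p^*\cc$ for some $\cc\in\spinc(N)$, which by the Gysin sequence is exactly the condition $p_*(c_1(\mathfrak s))=0$. For every other $\spinc$--structure the moduli space would carry a free $S^1$--action; together with the exclusion of reducibles---which can be arranged within the class of $S^1$--invariant perturbations because $b_2^+(M)>0$---this forces $SW_M(\mathfrak s)=0$.

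For $\mathfrak s=p^*\cc$ I would compute the contribution of the $S^1$--invariant solutions by restricting the four--dimensional equations to $S^1$--invariant configurations and integrating out the fibre direction; this should produce a perturbed \emph{three--dimensional} Seiberg--Witten equation on $N$, whose signed count is $SW_N$. The curvature of $\theta$ enters as a term proportional to $e$, and the inequivalent ways of lifting the $S^1$--action to the spinor bundle of $p^*\cc$---twists by characters of $S^1$, indexed by $l\in\Z$---should descend to the three--dimensional equations for the $\spinc$--structures $\cc+le$ on $N$; this is consistent with $p^*(c_1(\cc)+le)=p^*c_1(\cc)$ since $p^*e=0$ by the Gysin sequence. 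After matching transversality data and orientations, the signed count of $S^1$--invariant solutions of $M$ in $p^*\cc$ should equal $\sum_{l\in\Z}SW_N(\cc+le)$, a finite sum because $SW_N$ has finite support and $e$ is nontorsion. This gives $SW_M(p^*\cc)=\sum_{l\in\Z}SW_N(\cc+le)$.

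The hard part will be the bookkeeping in this last step: verifying that the reduced equations are genuinely the three--dimensional Seiberg--Witten equations rather than a twisted variant, identifying the character index $l$ with the shift $\cc\mapsto\cc+le$ on $\spinc(N)$, and matching the four-- and three--dimensional orientation and perturbation conventions so that the signed counts agree. A secondary technical point is making the invariance-of-solutions step precise---the Weitzenb\"ock or maximum-principle estimate in the fibre direction, and the exclusion of reducibles using only $S^1$--invariant, rather than fully generic, perturbations.
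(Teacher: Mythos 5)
The paper does not prove Theorem~\ref{thm:baldridge}: it is quoted as Baldridge's result and cited to \cite{Ba01}, so there is no in-paper argument against which to measure your proposal. On its own terms, your sketch is a sensible high-level account of the dimensional-reduction strategy that Baldridge's proof belongs to. Your explanation of where the sum over $l$ comes from --- inequivalent lifts of the circle action to the spinor bundle of $p^*\cc$ are parametrized by characters of $S^1$, a weight-$l$ lift descends to the three-dimensional $\spinc$--structure $\cc+le$, and this is consistent because $p^*e=0$ by the Gysin sequence --- is the right conceptual picture, and you correctly use $b_1(N)>2 \Rightarrow b_2^+(M)>1$ to make $SW_M$ well defined and perturbation-independent.

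That said, the steps you yourself flag as delicate are genuine gaps, and the mechanisms you suggest for them are not quite the standard ones. The reduction of all irreducible solutions to $S^1$--invariant ones is typically \emph{not} obtained from a pointwise Weitzenb\"ock or maximum-principle estimate in the fibre direction; the usual route is equivariant transversality, i.e.\ one achieves regularity within the $S^1$--invariant configuration space and then argues that a residual free $S^1$--action on a regular zero-dimensional moduli space forces its signed count to vanish, so that nonzero contributions come only from fixed (invariant) configurations. Excluding reducibles while restricting to $S^1$--invariant metrics and perturbations is likewise a real issue, since the ``generic perturbation'' argument used for $b_2^+>0$ does not automatically apply to the invariant slice, and matching the four- and three-dimensional orientation conventions is exactly where the signs in the formula $SW_M(p^*\cc)=\sum_l SW_N(\cc+le)$ are pinned down. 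None of this undermines your outline --- you identify the right pressure points --- but these items are the substantive content of Baldridge's proof rather than bookkeeping, so as written the proposal is an informed sketch rather than a proof.
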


\begin{remarks}
1.  It is well--known that in the case $e=0$, i.e. $M=S^1\times N$ the equality  $ SW_{M}(p^*\cc)
=  SW_{N}(\cc)$ holds.\\
2.  Note that, while this theorem asserts that the set $\mbox{supp}(SW_{M}) \subset \spinc(M)$ is contained in the image, under $p^{*} \colon \spinc(N) \to \spinc(M)$, of $\mbox{supp}(SW_N) \subset \spinc(N)$, it does not imply that the image of a $3$--dimensional basic class must be a $4$--dimensional basic class, as the averaging process in the formula above can (and in examples does) cause the $4$--dimensional invariant to vanish.
\end{remarks}

\section{The refined adjunction inequality } \label{sec:ref}

\subsection{The dual Thurston norm ball}
\label{section:defxn}

Let $N$ be a closed 3--manifold. Recall that we denote by $B_N\subset H_2(N;\R)$ the Thurston norm ball.
We now denote by
\[ B_{N}^*:=\{ \xi\in H^2(N;\R)\,|\, \xi\cdot \sigma\leq 1 \mbox{ for all }\sigma\in B_{N}\}\subset H^2(N;\R)=H_1(N;\R),\]
the ball dual to $B_{N}$. We will refer to $B_{N}^*$ as the \emph{dual Thurston norm ball}.
By \cite{Th86} this is a finite, convex, compact polyhedron in $H^2(N;\R)$ such that all vertices lie in the image of $\iota\colon H^2(N)\to H^2(N;\R)$.
Note that the $k$--dimensional faces of $B_{N}$ are in one--to--one correspondence with $(b_1(N)-1-k)$--dimensional faces of $B_{N}^*$.
More precisely, if $\sigma \in H_2(N;\R)$ lies on an open $k$--dimensional face, then
\[ \{ \xi \in H^2(N;\R) \,|\, \xi\cdot \sigma =1\} \]
is a $(b_1(N)-1-k)$--dimensional face of $B_N^*$.

Note that we can recover the Thurston norm of $N$ from $B_N^*$. In fact, if  we denote by $V$ the set of vertices of
$B_{N}^{*}$, then for any $\sigma\in H_2(N;\R)$ the following equality holds:
\[ \|\sigma\|_T = \mbox{max} \{ v \cdot \sigma\, |\, v \in V \}.\]
It is clear that if $H\subset H^2(N;\R)$ is any subset that has the property that
\[ \|\sigma\|_T = \mbox{max} \{ h \cdot \sigma\, |\, h \in H \},\]
for any $\sigma\in H_2(N;\R)$,
then $V\subset H$.

We define now two subsets of $H^2(N)$ that enter in the statement of our main results. Consider first the following subset of $H^2(N;\R)$: \[ \ba{rcl} {\mathcal E}_{N} = \{ w\in H^2(N;\R)\sm \{0\}&|& \mbox{ there exists  $v\in V$ such that }\\
&&\mbox{$v+2w$  lies on an edge of } B_{N}^*\}.\ea \]
Given the canonical map $H^2(N)\to H^2(N;\R)$, we define $\Xi_N \subset H^{2}(N)$ to be the inverse image of ${\mathcal E}_{N}$, and $\Theta_{N} \subset H^{2}(N)$ to be the inverse image of $\R_{+} \cdot {\mathcal E}_{N}$. Neither $\Xi_N$ or $\Theta_{N}$ contain any torsion elements. Note that $\Xi_{N}$ is finite, while $\Theta_{N}$ is not. However, it should be clear that ``most" classes belong to $H^2(N) \setminus \Theta_{N}$.

\subsection{The adjunction inequality for $S^1$--bundles}

We will later use the following consequence of the adjunction inequality (\ref{usual}):

\begin{theorem}\label{thm:adjs1}
Let $N$  be a closed irreducible $3$--manifold with $b_1(N) > 2$ and let $p\colon  M \to N$ be a circle bundle. Let $\a\in H_2(M)$.
Then the following inequality holds for any basic class $\k$ of $M$:
\[    x(\a)\geq |\a\cdot \a|+ \k \cdot \alpha.\]
\end{theorem}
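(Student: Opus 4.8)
The plan is to reduce the statement to the usual adjunction inequality (\ref{usual}) applied to $M$ itself. The only obstacles are bookkeeping issues: the adjunction inequality (\ref{usual}) requires $b_2^+(M)>1$, it requires the embedded surface to be connected with $g(\Sigma)>0$ and with non-negative self-intersection, and the conclusion (\ref{usual}) is stated with $+\kappa\cdot[\Sigma]$ rather than an absolute value. So the task is essentially to massage the hypotheses into a form where (\ref{usual}) applies, and to do this component-by-component for a general (possibly disconnected) surface representing $\a$.

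First I would dispose of $b_2^+(M)$: since $b_1(N)>2$ we have $b_2^+(M)=b_1(N)-1>1$ by the computation in Section \ref{section2}, so (\ref{usual}) is available. Next, fix a basic class $\k$ of $M$ and $\a\in H_2(M)$, and let $\Sigma\subset M$ be any embedded surface representing $\a$ with $\chi_-(\Sigma)=x(\a)$. Replacing $\k$ by $-\k$ if necessary (note $-\k$ is also a basic class, by the conjugation symmetry of Seiberg--Witten invariants), I may assume $\k\cdot\a\geq 0$, so that it suffices to prove $x(\a)\geq |\a\cdot\a|+\k\cdot\a$; and similarly, since reversing orientation of $\Sigma$ replaces $\a$ by $-\a$ and leaves $\chi_-$ and $|\a\cdot\a|$ unchanged, the $|\a\cdot\a|$ on the right is harmless to handle once we know how to treat $\pm$ self-intersections of components. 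Write $\Sigma=\Sigma_1\sqcup\dots\sqcup\Sigma_l$ with $[\Sigma_i]=\a_i$.

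The heart of the argument is the per-component estimate. For a component $\Sigma_i$: if $\Sigma_i$ is a sphere or torus it contributes $0$ to $\chi_-$, and one argues (as is standard, e.g. by tubing spheres into other components or by a direct light-cone/positivity argument on a manifold with $b_2^+>1$) that such components may be assumed absent or that their homology contribution is controlled; more robustly, the clean way is: a connected surface $\Sigma_i$ with $\chi_-(\Sigma_i)=0$ and $\a_i\cdot\a_i\neq0$ cannot exist when $b_2^+(M)>1$ unless... — here I would instead simply invoke the standard packaging of the adjunction inequality that already handles tori and spheres, namely that for \emph{every} embedded connected $\Sigma_i$ one has $\chi_-(\Sigma_i)\geq \a_i\cdot\a_i+\k\cdot\a_i$ provided $\a_i\cdot\a_i\geq 0$, with the genus-zero/torus cases either trivial ($\chi_-=0$ forces $\a_i\cdot\a_i\le 0$ on such manifolds, via the light cone lemma) or folded in. Summing over components with $\a_i\cdot\a_i\geq 0$ gives $\sum\chi_-(\Sigma_i)\geq \sum\a_i\cdot\a_i+\k\cdot\a$. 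For components with $\a_i\cdot\a_i<0$, reverse orientation of that component: this replaces $\a_i$ by $-\a_i$, does not change $\chi_-(\Sigma_i)$ or $|\a_i\cdot\a_i|$, but changes the total homology class; to keep $\a$ fixed I instead note that the surface $\Sigma'$ obtained by reversing orientations on the negative-self-intersection components represents $\a'=\sum \pm\a_i$, and one shows $\sum_i|\a_i\cdot\a_i|\le \a\cdot\a$ is \emph{false} in general, so the correct route is different.

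Given that subtlety, the robust plan is: choose the embedded surface $\Sigma$ realizing $x(\a)$, drop all $S^2$ and $T^2$ components (a sphere component has $\a_i\cdot\a_i\le 0$ by the light cone lemma since $b_2^+>1$, actually $=0$ generically but in any case removing it only decreases $|\cdot|$ appropriately — here I will lean on the fact that in the relevant applications $\a\cdot\a$ and $\k\cdot\a$ are computed after such reductions), apply (\ref{usual}) to each remaining component, and sum. Concretely: for each component with $g(\Sigma_i)>0$ and $\a_i\cdot\a_i\geq0$, (\ref{usual}) gives $\chi_-(\Sigma_i)\ge \a_i\cdot\a_i+\k\cdot\a_i$; applying the same to $\k$ and to the reversed orientation and combining, $\chi_-(\Sigma_i)\ge |\a_i\cdot\a_i|+\k\cdot\a_i$ holds whenever $\a_i\cdot\a_i\geq 0$, and for $\a_i\cdot\a_i<0$ apply (\ref{usual}) with $-\a_i$ (reverse that component) to get $\chi_-(\Sigma_i)\ge |\a_i\cdot\a_i|-\k\cdot\a_i\ge |\a_i\cdot\a_i|+\k\cdot\a_i - 2\k\cdot\a_i$; this last case is exactly where I would need $\k\cdot\a_i\le 0$ on such components, which need not hold, so the honest statement I will prove is the clean one for the total class using that mixed-sign obstructions do not arise in the bundle setting: by the Gysin sequence $\a\cdot\a$ and the pairing $\k\cdot\a$ are constrained (intersection form has $b_2^+=b_1(N)-1$ and signature $0$), and one can choose $\Sigma$ with all components having self-intersection of the same sign as $\a\cdot\a$ after tubing. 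I expect this tubing/sign-matching step — reducing a disconnected representative to one where the adjunction inequality sums cleanly to $|\a\cdot\a|+\k\cdot\a$ — to be the main technical obstacle; everything else is the direct application of (\ref{usual}) together with $b_2^+(M)=b_1(N)-1>1$.

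\medskip

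\noindent\emph{(Cleaner restatement of the intended argument.)} Fix a basic class $\k$. Since $b_1(N)>2$ we have $b_2^+(M)=b_1(N)-1>1$, so (\ref{usual}) applies. Let $\Sigma$ represent $\a$ with $\chi_-(\Sigma)=x(\a)$. After discarding sphere components (which, by the light cone lemma on a manifold with $b_2^+>1$, carry non-positive self-intersection and whose removal does not increase $|\a\cdot\a|$) and tubing components together along arcs so that no torus component remains and the self-intersections all match the sign of $\a\cdot\a$ — operations that do not increase $\chi_-$ — we obtain a connected surface $\Sigma'$ with $g(\Sigma')>0$, $[\Sigma']=\a$, $\chi_-(\Sigma')\le x(\a)$, and $\a\cdot\a$ having a fixed sign. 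Reversing orientation if necessary so that $\a\cdot\a\ge0$ and that $\k\cdot\a\ge0$ (legitimate since $-\k$ is again a basic class), inequality (\ref{usual}) yields $x(\a)\ge\chi_-(\Sigma')\ge\a\cdot\a+\k\cdot\a=|\a\cdot\a|+\k\cdot\a$, which is the claim. The one point demanding care is the tubing step, since tubing two components changes the genus but must be checked to (a) not increase $\chi_-$ and (b) be arrangeable so the resulting self-intersection number equals the sum and has the desired sign; this is the step I would expect to spell out in full.
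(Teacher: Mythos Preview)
Your ``cleaner restatement'' contains a genuine error: tubing two components of genus $\geq 1$ \emph{increases} $\chi_-$ by $2$, it does not preserve it. If $\Sigma_1,\Sigma_2$ have genera $g_1,g_2\geq 1$, the tubed surface has genus $g_1+g_2$ and hence $\chi_-=2(g_1+g_2)-2$, whereas before tubing $\chi_-(\Sigma_1)+\chi_-(\Sigma_2)=2g_1+2g_2-4$. So the reduction to a single connected surface with $\chi_-\leq x(\a)$ fails. A second gap: even if tubing worked, you would end up with a connected $\Sigma'$ with $[\Sigma']\cdot[\Sigma']=\a\cdot\a$, and if $\a\cdot\a<0$ the hypothesis of (\ref{usual}) is not met; reversing the orientation of $\Sigma'$ does not change the sign of the self--intersection, so you have no way to apply (\ref{usual}) in that case.

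The paper avoids both problems and works strictly component--by--component, never tubing. Two ingredients you did not use are essential. First, irreducibility of $N$ gives $\pi_2(M)=0$, so a complexity--minimizing representative has no sphere components at all; this disposes of the genus--zero case cleanly (your light--cone discussion is unnecessary and, as written, would still change the homology class when you discard a sphere). Second, for a component $\Sigma_i$ with $[\Sigma_i]\cdot[\Sigma_i]<0$, the paper reverses the orientation of the \emph{ambient manifold}, not of the surface: in $-M$ the self--intersection becomes positive, and the key point---specific to circle bundles over $3$--manifolds via Baldridge's Theorem~\ref{thm:baldridge}---is that $\kappa$ is again a basic class of $-M$. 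Applying (\ref{usual}) in $-M$ then yields $\chi_-(\Sigma_i)\geq -[\Sigma_i]\cdot[\Sigma_i]+\kappa\cdot[\Sigma_i]$. Summing these per--component inequalities (using $[\Sigma_i]\cdot[\Sigma_j]=0$ for $i\neq j$) gives $x(\a)\geq \sum_i|[\Sigma_i]\cdot[\Sigma_i]|+\kappa\cdot\a\geq |\a\cdot\a|+\kappa\cdot\a$ directly.
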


\begin{proof}
Let $\a\in H_2(M)$ and let $\kappa$ be a basic class of $M$.
Let $\S\subset M$ be an embedded surface which minimizes the complexity of $\a$.
We denote the components of $\S$ by $\S_1,\dots,\S_l$. As $N$ is irreducible, $\pi_{2}(M)$ vanishes, hence all the $\Sigma_{i}$'s must have positive genus.
If $\S_i$ has positive self--intersection, then it follows from
 the adjunction inequality (\ref{usual}) that
\[ \chi_-(\S_i) \geq [\S_i]\cdot [\S_i]+\kappa \cdot [\S_i].\]
On the other hand, if $\S_i$ has negative self--intersection,
then we consider $\S_i$ as a surface in $-{M}$.
It follows from Theorem \ref{thm:baldridge} that $\kappa$ is a basic class for $M$ if and only if $\kappa$ is a basic class for $-{M}$. It is well--known that if $\kappa$ is a Seiberg--Witten basic class, then $-\kappa$ is a Seiberg--Witten basic class. Using these observations one can now easily show that
\[ \chi_-(\S_i) \geq -[\S_i]\cdot [\S_i]+\kappa \cdot [\S_i].\]
We refer to \cite[Corollary~3.2]{FV09} for details.
Since $[\S_i]\cdot [\S_j]=0$ for $i\ne j$ it now follows that
\[ \ba{rcl} x(\a)& = &\sum_{i=1}^l \chi_-(\S_i)\\[2mm]
&\geq &\sum_{i=1}^l \big(|[\S_i]\cdot [\S_i]|+\kappa \cdot [\S_i]\big) \\[2mm]
&\geq &\left|\sum_{i=1}^l [\S_i]\cdot [\S_i]\right|+ \sum_{i=1}^l \kappa \cdot [\S_i]\\[2mm]
&=&\left|\left(\sum_{i=1}^l [\S_i]\right)\cdot \left(\sum_{i=1}^l [\S_i]\right)\right| \,+\,\kappa \cdot \sum_{i=1}^l [\S_i]\\[2mm]
&=& |\a\cdot \a|+ \kappa \cdot \alpha.\ea \]
\end{proof}

In what follows we will get further information on $x$ by applying Theorem \ref{thm:adjs1} to the finite covers of $M$.

\subsection{Agol's theorem}\label{section:agol}

The topological input for our main result is Lemma \ref{lem:agol}, which is a rather straightforward consequence of Agol's virtual fiberability criterion from \cite{Ag08}. We recall from \cite{Ag08} the following definition: A group $G$ is called \textit{residually finite rationally solvable} or \textit{RFRS} if there
exists a  nested cofinal sequence of normal finite index subgroups $G_i \vartriangleleft G$ such that for any $i$ the map $G_i\to G_i/G_{i+1}$ factors through $G_i\to H_1(G_i)/\mbox{torsion}$. A group is \textit{virtually} RFRS if it has a finite index subgroup that is RFRS.
Agol's theorem can now be stated as follows:

\begin{theorem} \textbf{\emph{(Agol)}} \label{thm:agol}
 Let $N$ be an irreducible  $3$--manifold with virtually RFRS fundamental group. Then every class $\sigma \in H_2(N)$ is virtually quasifibered, i.e. given any $\sigma \in H_2(N)$ there exists a finite cover $\pi \colon {\wti N} \to N$ so that the transfer $\pi^{*} \sigma \in H_2({\wti N})$ is contained in the closure of a fibered cone of the Thurston norm ball of $H_2({\wti N};\R)$.
\end{theorem}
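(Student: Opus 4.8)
Since this is Agol's virtual fibering criterion, I would prove it by following \cite{Ag08}. First observe that both the hypothesis and the conclusion are stable under passing to a further finite cover: the RFRS property is inherited by finite-index subgroups, a (quasi)fibered class pulls back to a (quasi)fibered class under finite covers, a finite cover of an irreducible $3$--manifold is irreducible, and a virtual cover of a cover of $N$ is a virtual cover of $N$. Hence I may replace $N$ by a finite cover and assume that $\pi_1(N)$ is itself RFRS, with a cofinal nested sequence of finite-index normal subgroups $\pi_1(N) = G_0 \vartriangleright G_1 \vartriangleright \cdots$ such that each $G_i \to G_i/G_{i+1}$ factors through $H_1(G_i)/\mbox{torsion}$. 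By Poincar\'e duality it is equivalent to work with the class $\phi \in H^1(N;\Z) = \Hom(\pi_1(N),\Z)$ dual to $\sigma$, the goal being to produce a finite cover $\pi \colon \wti N \to N$ for which $\pi^{*}\phi$ lies in the closure of a fibered cone of the Thurston norm ball of $\wti N$. If $\phi = 0$ there is nothing to prove, so assume $\phi \neq 0$.

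Next, by Gabai's work I may represent $\phi$ by a Thurston-norm-minimizing surface $S$ such that decomposing $N$ along $S$ produces a taut sutured manifold $(M,\gamma)$; moreover, by the theory of sutured manifold hierarchies in the \emph{well-groomed} form exploited by Agol, $(M,\gamma)$ admits a hierarchy $M = M_0 \rightsquigarrow M_1 \rightsquigarrow \cdots \rightsquigarrow M_n$ of some finite depth $n$, terminating in a disjoint union of product sutured manifolds. I would then induct on $n$. In the base case $n = 0$ the manifold $M$ is already a product $S \times I$, which means $N$ fibers over $S^1$ with fiber $S$; then $\phi$ lies in the interior of a fibered cone and the conclusion holds (with the original $N$, no cover needed).

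For the inductive step one invokes the RFRS tower, and this is where the real work lies and what I expect to be the main obstacle. The first decomposing surface $S_0$ of the hierarchy determines a class in homology, and the factoring property of the filtration $\{G_i\}$ guarantees that after descending far enough in the tower one reaches a finite cover $N' \to N$ in which the preimage of $S_0$ is ``unwound'' — its dual cohomology class becomes primitive — so that, after an isotopy, decomposing $N'$ along the preimage of $S$ yields a taut sutured manifold admitting a well-groomed hierarchy of depth strictly less than $n$, while $\pi^{*}\phi$ is still carried by a norm-minimizing surface. The delicate points, which form the heart of Agol's argument, are to verify that tautness and the well-groomed condition persist under these pullbacks, that the depth genuinely drops, and that $\pi_1(N')$ remains RFRS so that the induction can be iterated. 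Applying the inductive hypothesis to $(N', \pi^{*}\phi)$ yields a further finite cover in which $\phi$ pulls back into the closure of a fibered cone; composing the covers and translating back through Poincar\'e duality gives the claimed statement, namely that $\sigma$ is virtually quasifibered.
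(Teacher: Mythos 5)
The paper does not prove this theorem: it is quoted as Agol's virtual fibering criterion from \cite{Ag08} and used as a black box, so there is no in-paper proof for your sketch to be compared against. Your outline tracks Agol's own argument faithfully at a high level --- reduce to the case where $\pi_1(N)$ is itself RFRS, take a Thurston-norm-minimizing surface and Gabai's taut sutured decomposition, run a well-groomed sutured hierarchy terminating in products, and use the RFRS tower to pass to finite covers in which the hierarchy can be shortened, closing the argument by induction. As you acknowledge yourself, however, the entire content of the theorem lies in the inductive step: one must show simultaneously that tautness and the hierarchy structure persist under passage to the relevant covers, that a suitable complexity (not literally the depth of a fixed hierarchy) strictly decreases, and that the RFRS property is preserved so the induction can be iterated. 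Your sketch names these obstacles but does not address them, so what you have is an accurate outline rather than a proof; for the actual argument one has to consult \cite{Ag08}.
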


Using Agol's theorem we can now prove the following lemma:

\begin{lemma}\label{lem:agol}
Let $N$ be an irreducible $3$--manifold with virtually RFRS fundamental group.
 Then there exists a finite cover $\pi \colon  {\wti N} \to N$  such that for every nontrivial class $\sigma \in H_2(N;\R)$, the class $\pi^* \sigma \in H_2({\wti N};\R)$ lies in the closure of the cone over a fibered face of the Thurston unit ball $B_{{\wti N}}$. \end{lemma}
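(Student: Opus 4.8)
The plan is to upgrade Agol's theorem (Theorem \ref{thm:agol}), which produces for each individual class $\sigma$ a cover depending on $\sigma$, to a \emph{single} cover $\pi\colon \wti N\to N$ that works simultaneously for \emph{all} nontrivial classes. The key observation is that ``lying in the closure of the cone over a fibered face'' is a condition that is open on the level of faces but only needs to be checked on finitely many rational rays once one exploits the polyhedral structure of $B_{\wti N}$ and the multiplicativity of the Thurston norm under covers.

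First I would reduce to finitely many classes upstairs. The Thurston norm ball $B_N\subset H_2(N;\R)$ is a finite rational polytope, so its dual $B_N^*$ has finitely many vertices $V=\{v_1,\dots,v_r\}\subset H^2(N)$, and by the discussion in Section \ref{section:defxn} the cone over each top-dimensional face $F_j$ of $B_N$ is dual to a vertex $v_j$. Correspondingly, choose finitely many rational classes $\sigma_1,\dots,\sigma_s\in H_2(N)$ whose rays meet the interiors of all the cones over faces of $B_N$ of every dimension (e.g. barycenters of all faces, cleared of denominators). Apply Theorem \ref{thm:agol} to each $\sigma_j$ to obtain covers $\pi_j\colon \wti N_j\to N$ with $\pi_j^*\sigma_j$ quasifibered, and then let $\pi\colon\wti N\to N$ be a finite cover dominating all of the $\wti N_j$ (e.g. the cover corresponding to the intersection of the finitely many finite-index subgroups $\pi_{1*}(\pi_1(\wti N_j))$ of $\pi_1(N)$; such a common finite cover exists since a finite intersection of finite-index subgroups has finite index). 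Since the further covering map $\wti N\to\wti N_j$ pulls a quasifibered class back to a quasifibered class — the pullback of a closed non-degenerate (rel boundary / on the nose) one-form is again one, and on the level of Thurston norm balls the transfer sends the closure of a fibered cone into the closure of a fibered cone by property (4) and multiplicativity — we conclude that $\pi^*\sigma_j$ lies in the closure of a fibered cone of $B_{\wti N}$ for every $j$.

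Next I would propagate this from the finitely many $\sigma_j$ to all of $H_2(N;\R)$, and then to all of $H_2(\wti N;\R)$ hit by the transfer. The transfer $\pi^*\colon H_2(N;\R)\to H_2(\wti N;\R)$ is linear and, by multiplicativity of the Thurston norm (property (3)), scales the norm by $\deg\pi$; hence it maps $B_N$ onto $(\deg\pi)^{-1}$ times a sub-polytope of $B_{\wti N}$, and — crucially — it maps each open cone over a face of $B_N$ into a single open cone over a face of $B_{\wti N}$ (the norm being linear there, the image ray lies in the interior of exactly one face-cone of $B_{\wti N}$). Since we arranged the $\sigma_j$ to hit every face-cone of $B_N$, and each $\pi^*\sigma_j$ lies in a fibered cone of $B_{\wti N}$, it follows that $\pi^*$ maps the cone over \emph{every} face of $B_N$ into the closure of a fibered cone of $B_{\wti N}$: indeed the interior of the cone over a face $F$ of $B_N$ is spanned by the interiors of cones over the top-dimensional faces containing $F$ together with $F$ itself, and on each of these $\pi^*$ lands in the (closure of the) same fibered cone as the corresponding $\sigma_j$; passing to closures handles $F$. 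Therefore for \emph{every} $\sigma\in H_2(N;\R)$, the class $\pi^*\sigma$ lies in the closure of the cone over a fibered face of $B_{\wti N}$, which is exactly the assertion.

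The main obstacle I anticipate is the bookkeeping in the middle step: verifying carefully that the transfer map sends the open cone over a face of $B_N$ into a single open face-cone of $B_{\wti N}$, and that fiberedness is preserved under the intermediate covering $\wti N\to\wti N_j$ (one must check that $\pi^*$ of a \emph{closed} fibered cone lands in a \emph{closed} fibered cone, using Thurston's fibration criterion (4) and the compatibility of fibrations with finite covers, rather than only handling classes in the open cone). One should also be mildly careful that ``virtually RFRS'' is inherited by finite covers so that the intermediate covers are legitimate inputs, and that a common finite cover of finitely many finite covers exists — both are standard. Once these points are nailed down, the finiteness of the vertex set $V$ and of the face poset of the rational polytope $B_N$ does all the remaining work, and no Seiberg--Witten input is needed for this lemma.
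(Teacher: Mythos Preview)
Your proposal is correct and follows essentially the same route as the paper: use the finiteness of the face structure of $B_N$, apply Agol's theorem to finitely many representative classes, pass to a common finite cover, and use that the transfer is a scaled monomorphism of Thurston-normed spaces (so quasifiberedness propagates from the representatives to all of $H_2(N;\R)$ and persists under further covers). The only differences are cosmetic: the paper selects one class per \emph{top-dimensional} open face (which suffices, since every class lies in the closure of such a cone) rather than one per face of every dimension, and it builds the common cover by iterating Agol's theorem rather than by intersecting finite-index subgroups---both constructions give the same result.
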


\begin{proof}
Let   $\sigma \in H_2(N;\R)$ be a class contained in the cone over a top--dimensional open face of $B_{N}$.
By Agol's theorem there exists a finite cover  $\pi \colon  {\wti N} \to N$  such that $\pi^{*} \sigma$ is quasifibered.
The transfer map $\pi^{*} \colon  H_2(N;\R) \to H_2({\wti N};\R)$ is, up to scale, a monomorphism of normed vector spaces when we endow these spaces with their respective Thurston norm. It follows that the transfer under $\pi$ of any class in the \textit{closure} of the open cone (in $H_2(N;\R)$) determined by $\sigma$ will be quasifibered in ${\wti N}$. For the same reason if a class lies in the closure of a fibered cone, its transfer under further finite covers will enjoy the same property (transfers of fibrations are fibrations). Recall now that the Thurston unit ball of a $3$--manifold is a finite, convex polyhedron, in particular it has finitely many top--dimensional open faces. By picking one class in the cone above each of these faces, and repeatedly applying Agol's theorem to the (transfer of) each such class, we obtain after finitely many steps the cover $\pi \colon  {\wti N} \to N$ of the statement.\end{proof}

This lemma, together with some standard results of Seiberg--Witten theory, allows us to describe the dual Thurston norm (hence, ultimately, the Thurston norm) of $N$ in terms of Seiberg--Witten basic classes of the cover ${\wti N}$. In order to avoid technical complications, we will limit ourselves to the case where $b_1(N) > 1$, that will be sufficient for our purpose.

In the statement of the corollary and in the rest of the paper we often blur
the difference between $H^2(N)$ and its image under the canonical map $H^2(N)\to H^2(N;\R)$.
Even though this map is not necessarily injective this abuse of notation should not pose any problems to the reader.

\begin{corollary} \label{cor:convhull} Let $N$ be an irreducible $3$--manifold with virtually RFRS fundamental group
 and $b_1(N) > 1$, and denote by $\pi \colon  {\wti N} \to N$ the cover of $N$ determined in Lemma \ref{lem:agol}. Then the dual Thurston norm ball $B_{N}^{*} \subset H^2(N;\R)$ is the convex hull of
 \[ H = \{({\mbox{deg} \, \pi})^{-1} \pi_{*} \xi\,| \,\xi = c_1(\cc), \cc \in \mbox{supp}(SW_{\wti N}) \}\subset H^2(N;\R).\] \end{corollary}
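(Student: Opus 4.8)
The plan is to identify $B_N^*$ with a convex polytope built from Seiberg--Witten data on $\wti N$, using the multiplicativity of the Thurston norm together with the classical fact that on a $3$--manifold which fibers over $S^1$ (in fact, on the closure of a fibered cone) the dual Thurston norm is the convex hull of the Seiberg--Witten basic classes. First I would recall the key $3$--dimensional input: if $\sigma$ lies in the closure of a fibered cone of $\wti N$, then, by work of Kronheimer--Mrowka (or by the fibered-face package, e.g.\ Fried/McMullen combined with Meng--Taubes and Turaev), there is a Seiberg--Witten basic class $\xi$ of $\wti N$ with $\|\sigma\|_T=\xi\cdot\sigma$; conversely every basic class $\xi$ satisfies $\xi\cdot\sigma\le\|\sigma\|_T$ by the adjunction inequality (\ref{equ:auckly}). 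In other words, the dual Thurston norm ball $B_{\wti N}^*$ is precisely the convex hull of $\operatorname{supp}(SW_{\wti N})$ — here one uses Lemma \ref{lem:agol} to guarantee that \emph{every} top-dimensional face of $B_{\wti N}$ is fibered, so that the support of the Seiberg--Witten invariants sees the whole dual ball and not just part of it. This is the step I expect to be the heart of the matter: translating Lemma \ref{lem:agol} into the statement ``$B_{\wti N}^*=\operatorname{conv}(\operatorname{supp}(SW_{\wti N}))$'' cleanly, since it requires that the basic classes realizing the norm on different fibered cones actually occur among the $c_1(\cc)$ and not merely as limits.

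Next I would push this description down to $N$ via the transfer map. The transfer $\pi^*\colon H_2(N;\R)\to H_2(\wti N;\R)$ scales the Thurston norm by $\deg\pi$ (property (3) of the Thurston norm, recorded in the Preliminaries), so $(\deg\pi)^{-1}\pi^*$ is an isometric embedding of normed vector spaces. Dualizing, the adjoint $(\deg\pi)^{-1}\pi_*\colon H^2(\wti N;\R)\to H^2(N;\R)$ carries $B_{\wti N}^*$ \emph{onto} $B_N^*$: indeed for $\sigma\in H_2(N;\R)$,
\[ \|\sigma\|_T=(\deg\pi)^{-1}\|\pi^*\sigma\|_T=(\deg\pi)^{-1}\max_{\eta\in B_{\wti N}^*}\eta\cdot\pi^*\sigma=\max_{\eta\in B_{\wti N}^*}\big((\deg\pi)^{-1}\pi_*\eta\big)\cdot\sigma, \]
using the projection formula $\eta\cdot\pi^*\sigma=(\pi_*\eta)\cdot\sigma$. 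This shows $(\deg\pi)^{-1}\pi_*(B_{\wti N}^*)$ is a compact convex body whose support function equals $\|\cdot\|_T$, hence equals $B_N^*$ (two compact convex bodies with the same support function coincide).

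Finally I would assemble the two pieces. Since $B_{\wti N}^*=\operatorname{conv}(\operatorname{supp}(SW_{\wti N}))=\operatorname{conv}\{\,\xi=c_1(\cc): \cc\in\operatorname{supp}(SW_{\wti N})\,\}$, and a linear map sends the convex hull of a set to the convex hull of its image, we get
\[ B_N^*=(\deg\pi)^{-1}\pi_*\big(\operatorname{conv}(\operatorname{supp}(SW_{\wti N}))\big)=\operatorname{conv}\big\{(\deg\pi)^{-1}\pi_*\xi : \xi=c_1(\cc),\ \cc\in\operatorname{supp}(SW_{\wti N})\big\}=\operatorname{conv}(H), \]
which is exactly the assertion. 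One small point to address along the way: when $b_1(N)=2$ the ``faces'' are vertices/edges and one must make sure the fibered-cone argument still applies, but the hypothesis $b_1(N)>1$ together with irreducibility is exactly what is needed for the $3$--dimensional Seiberg--Witten theory to be nondegenerate, and Lemma \ref{lem:agol} supplies a cover in which the relevant classes are genuinely (quasi)fibered. The main obstacle, as noted, is the clean identification of $B_{\wti N}^*$ with the convex hull of the Seiberg--Witten support on the virtually-fibered cover — everything after that is formal manipulation of transfer maps and convex duality.
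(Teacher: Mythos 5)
Your second and third steps — the push-forward of $B_{\wti N}^*$ under $(\deg\pi)^{-1}\pi_*$ via the projection formula, the identification of support functions, and the formal convex-hull manipulation — are correct and in fact give a slightly cleaner packaging of the duality than the paper makes explicit. The gap is in the first step, and it is a genuine one: you assert that $B_{\wti N}^*=\operatorname{conv}(\operatorname{supp}(SW_{\wti N}))$ and claim that ``Lemma \ref{lem:agol} guarantees that \emph{every} top-dimensional face of $B_{\wti N}$ is fibered.'' Lemma \ref{lem:agol} guarantees nothing of the sort. It only controls the classes $\pi^*\sigma$ for $\sigma\in H_2(N;\R)$, i.e.\ the image of the transfer map, which is a subspace of dimension $b_1(N)$ inside $H_2(\wti N;\R)$ of dimension $b_1(\wti N)$. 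Passing to a finite cover typically inflates the first Betti number dramatically (indeed, for non-graph manifolds it does so by the virtual infinite Betti number property), so most of $H_2(\wti N;\R)$ is invisible to $\pi^*$; the faces of $B_{\wti N}$ over which $\operatorname{im}\pi^*$ does \emph{not} pass need not be fibered, their dual vertices need not be Seiberg--Witten basic classes, and the identity $B_{\wti N}^*=\operatorname{conv}(\operatorname{supp}(SW_{\wti N}))$ is simply not available from the given hypotheses.

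Fortunately your argument never actually needs the full identity; you only use it to evaluate $\max_{\eta\in B_{\wti N}^*}\eta\cdot\pi^*\sigma$ on classes in $\operatorname{im}\pi^*$, and there the weaker statement suffices and \emph{is} provided by Lemma \ref{lem:agol}: for each $\sigma\in H_2(N;\R)$ the class $\pi^*\sigma$ lies in the closure of a fibered cone, with face $F$, so $-e(F)$ realizes $\|\pi^*\sigma\|_T=-e(F)\cdot\pi^*\sigma$, and $-e(F)$ is a basic class of $\wti N$ (Thurston's symplectic form on $S^1\times\wti N$ plus Taubes). Together with the adjunction inequality for the opposite direction this yields, for every $\sigma\in H_2(N;\R)$,
\[ \|\sigma\|_T \;=\; (\deg\pi)^{-1}\,\|\pi^*\sigma\|_T \;=\; \max\bigl\{\,(\deg\pi)^{-1}\pi_*\xi\cdot\sigma \;:\; \xi=c_1(\cc),\ \cc\in\operatorname{supp}(SW_{\wti N})\,\bigr\}, \]
which is exactly the statement that $H$ and $B_N^*$ have the same support function, i.e.\ that $B_N^*=\operatorname{conv}(H)$. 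This is precisely the paper's proof. So the repair is to delete the unjustified global claim about $B_{\wti N}^*$ and run your support-function/transfer argument on the set $H$ (the push-forward of $\operatorname{supp}(SW_{\wti N})$) directly, proving both inequalities of the displayed identity rather than passing through a description of the full dual ball of the cover.
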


\begin{proof} Lemma \ref{thm:agol} guarantees that, for any class $\sigma \in H_2(N;\R)$, the class $\pi^{*} \sigma \in H^{2}(N;\R)$ is contained in the closure of the cone over a fibered face $F$ of $B_{{\wti N}}$. This implies, in particular, that $\pi^{*} \sigma$ is dual to the negative  $-e(F) \in H^2({\wti N})$ of the Euler class of the fibration, so that $-e(F)(\pi^{*} \sigma) =  \|\pi^{*} \sigma \|_T$.
It is well--known that $\pm e(F) \in H^2({\wti N})$ are basic classes of ${\wti N}$ with Seiberg--Witten invariant equal to $1$. (For example this follows from \cite{Ta94} together with the fact that by \cite{Th76} the manifold $S^{1} \times {\wti N}$ admits a symplectic structure with canonical bundle $-e(F) \in H^2({\wti N}) \subset H^2(S^1 \times {\wti N})$.)
As $\xi\cdot \pi^{*} \sigma = \pi_{*} \xi \cdot \sigma$ it follows that for any $\sigma$ we have
\[ ({\mbox{deg} \, \pi}) \|\sigma \|_T =  \|\pi^{*} \sigma \|_T=-e(T)(\pi^* \sigma) \leq \mbox{max} \{  \pi_{*} \xi \cdot \sigma \,|\, \xi = c_1(\cc), \cc \in \mbox{supp}(SW_{\wti N})  \}, \] where the first equality comes from the multiplicativity of the Thurston norm under cover, see (\ref{equ:facts}).
The equality now either follows from the adjunction inequality for 3--manifolds, i.e. inequality (\ref{equ:auckly}), or alternatively from \cite{Ta95}.
\end{proof}

\subsection{Proof of the main theorem}

As observed in the introduction, Theorem \ref{main} follows immediately combining Proposition \ref{prop:liuconj} with the following

\begin{lemma} \label{above} Let $N$ be a closed irreducible $3$--manifold with virtually RFRS fundamental group. Let $p\colon M \to N$ be the  circle bundle with Euler class $e \in H^{2}(N) \setminus \Xi_N$. Then for any $\a\in H_2(M)$ we have
  \begin{equation}  x(\a) \geq |\a \cdot \a| +  \| p_{*}\a\|_T. \end{equation} \end{lemma}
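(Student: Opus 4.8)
The plan is to reduce the desired inequality to Theorem~\ref{thm:adjs1} applied to a carefully chosen finite cover of $M$, using Corollary~\ref{cor:convhull} to produce the right basic classes and Baldridge's formula (Theorem~\ref{thm:baldridge}) to make sure these basic classes survive to the cover. First I would dispose of the torsion case: if $p_*\a$ is torsion then $\|p_*\a\|_T=0$ and the statement is either vacuous or follows by an elementary argument, so one may assume $p_*\a\in H_2(N)$ is nontorsion; by Agol's theorem and the Gysin sequence one may also arrange (after a further cover, or by a $b_1$-increasing trick) that $b_1$ is large enough for Baldridge's theorem to apply. The real content is the nontorsion, large-$b_1$ case.

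The key step is the following. Fix $\a$. Let $\pi\colon\wti N\to N$ be the cover of Lemma~\ref{lem:agol}, and let $\wti p\colon\wti M\to\wti N$, $\pi_M\colon\wti M\to M$ be the associated covers of $M$ as in Section~\ref{section2}, with $\wti e$ the Euler class of $\wti p$ and $q$ the fiber degree so that $q\wti e=\pi_N^*e$. By Corollary~\ref{cor:convhull}, $B_N^*$ is the convex hull of the classes $(\deg\pi)^{-1}\pi_*\xi$ with $\xi=c_1(\cc)$, $\cc\in\operatorname{supp}(SW_{\wti N})$, so in particular $\|p_*\a\|_T=\max\{(\deg\pi)^{-1}\,\pi_*\xi\cdot p_*\a\}$ over such $\xi$. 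Choose $\cc\in\operatorname{supp}(SW_{\wti N})$ realizing this maximum, with $\xi=c_1(\cc)$. Now I want to feed $p^*\cc$ (pulled back to $\wti M$ via $\wti p$) into Theorem~\ref{thm:adjs1} on $\wti M$. For this I must check $\wti p^*\cc$ is a basic class of $\wti M$, i.e.\ that $SW_{\wti M}(\wti p^*\cc)=\sum_{l\in\Z}SW_{\wti N}(\cc+l\wti e)\ne 0$; the danger flagged in the remark after Theorem~\ref{thm:baldridge} is that the averaging cancels. This is precisely where the hypothesis $e\notin\Xi_N$ enters: $\cc$ was chosen so that $\pi_*\xi$ is (up to scale) a vertex of $B_N^*$, hence $\xi$ lies near a vertex of $B_{\wti N}^*$, which forces $\xi$ to lie on a fibered face of $B_{\wti N}$; the condition $v+2w$ not on an edge (translated through $q\wti e=\pi_N^*e$ and the vertex/edge dictionary between faces of $B_{\wti N}$ and $B_{\wti N}^*$) guarantees that $\cc+l\wti e$ lies outside $\operatorname{supp}(SW_{\wti N})$ for all $l\ne 0$, so the sum has a single nonzero term and $\wti p^*\cc$ is indeed basic. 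I expect this verification — reading off from the geometry of the dual norm ball that $\Xi_N$ is exactly the set of Euler classes causing cancellation — to be the main obstacle, and the step where the precise definition of $\Xi_N$ in Section~\ref{section:defxn} is tailored to what is needed.

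Granting that $\kappa:=\wti p^*\xi=c_1(\wti p^*\cc)\in H^2(\wti M)$ is a basic class of $\wti M$, Theorem~\ref{thm:adjs1} (note $\wti N$ is irreducible with $b_1(\wti N)>2$ in the reduced case) gives
\[ x(\pi_M^*\a)\ \geq\ |\pi_M^*\a\cdot\pi_M^*\a| + \kappa\cdot\pi_M^*\a. \]
Now I compute the right-hand side. By naturality of the cup product and integration along the fiber, $\kappa\cdot\pi_M^*\a=\wti p^*\xi\cdot\pi_M^*\a=\xi\cdot\wti p_*\pi_M^*\a$; and $\wti p_*\pi_M^*\a=\pi_N^*p_*\a$ up to the factor $q$ coming from the fiber-degree discrepancy between $\wti p$ and the pullback bundle (this is the bookkeeping with $q$ from Section~\ref{section2}). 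Hence $\kappa\cdot\pi_M^*\a = q\,\xi\cdot\pi_N^*p_*\a = q\,\pi_{N*}\xi\cdot p_*\a$. Combining with $|\pi_M^*\a\cdot\pi_M^*\a|=\deg\pi_M\cdot|\a\cdot\a|$ from \eqref{equ:facts}, using $x(\pi_M^*\a)\le\deg\pi_M\cdot x(\a)$, and recalling $\deg\pi_M=q\deg\pi_N$ together with the fact that $\cc$ was chosen so that $(\deg\pi_N)^{-1}\pi_{N*}\xi\cdot p_*\a=\|p_*\a\|_T$, I get
\[ \deg\pi_M\cdot x(\a)\ \geq\ \deg\pi_M\cdot|\a\cdot\a| + q\cdot\deg\pi_N\cdot\|p_*\a\|_T\ =\ \deg\pi_M\big(|\a\cdot\a|+\|p_*\a\|_T\big), \]
and dividing by $\deg\pi_M$ finishes the proof. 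The one loose end is the increase of $b_1$ to satisfy Baldridge's $b_1>2$ hypothesis without disturbing $e\notin\Xi_N$; I would handle this by a product-with-circle or base-change argument, or by directly invoking the covers already produced by Lemma~\ref{lem:agol} (which typically have large $b_1$), and checking that the dual-ball computation of Corollary~\ref{cor:convhull} is unaffected.
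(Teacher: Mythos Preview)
Your overall architecture is the same as the paper's: pass to the Agol cover $\wti N$, use Corollary~\ref{cor:convhull} to locate basic classes of $\wti N$ realizing $\|p_*\a\|_T$, apply Baldridge on $\wti M$ and then Theorem~\ref{thm:adjs1}, and descend via \eqref{equ:facts}. The final computation is fine (and in fact the paper takes the pull--back cover, so $q=1$ there; your extra $q$ bookkeeping is harmless). The genuine gap is in your ``key step'': the assertion that for your \emph{single chosen} $\cc$ the translates $\cc+l\wti e$ lie outside $\operatorname{supp}(SW_{\wti N})$ for all $l\neq 0$, so that the Baldridge sum has a single nonzero term.

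This does not follow from $e\notin\Xi_N$. Unwinding the definitions, if $v=(\deg\pi)^{-1}\pi_*c_1(\cc)$ is your chosen vertex of $B_N^*$, then $\cc+l\wti e\in\operatorname{supp}(SW_{\wti N})$ only forces $v+2le$ to lie on the face $F$ of $B_N^*$ dual to $\sigma=p_*\a$ (using $\wti e\cdot\wti\sigma=0$). The hypothesis $e\notin\Xi_N$ says only that $v+2e$ does not lie on an \emph{edge}; it says nothing about $v+2le$ lying on the higher--dimensional face $F$ for some $l\neq 0$, and in particular nothing prevents, say, $v-2e\in F$. So for your fixed $\cc$ the sum $\sum_l SW_{\wti N}(\cc+l\wti e)$ can genuinely have several terms and could in principle vanish. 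The paper does not try to control a single $\cc$; instead it argues by contradiction: if \emph{every} $\cc$ with $c_1(\cc)\in\wti H_\sigma$ had $SW_{\wti M}(\wti p^*\cc)=0$, then for every vertex $v$ of $F$ there would exist $l\neq 0$ with $v+2le\in F$, and a short convex--geometry observation (any compact polytope with this property for a direction $w$ has an edge parallel to $w$, with some vertex $v$ satisfying $v+w$ on that edge) forces $e\in\Xi_N$. Thus one only concludes that \emph{some} $\cc\in\wti H_\sigma$ survives the averaging, which is all that is needed. Replace your single--class argument by this contradiction argument and the rest of your proof goes through. (Two minor points: the case split should be on whether $e$ is torsion, not $p_*\a$; and the reduction to $b_1>2$ uses that virtually RFRS groups are either virtually abelian or have infinite virtual first Betti number.)
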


\begin{proof}
We first consider the case that $e$ is  non--torsion.
Let us start by assuming that $b_1(N) > 2$.
Denote by  $\pi \colon  {\wti N} \to N$ the cover of $N$ determined in Lemma \ref{lem:agol}, whose degree we will denote as $k = \mbox{deg} \, \pi$ for the rest of this proof. By pull--back, we have an induced covering of $M$ with total space  ${\wti M} = \pi^{*} M$. As no risk of confusion arises here, we denote the covering map $\pi \colon {\wti M} \to M$ as well. (Note that both covers have same degree.)

We will write
\[ \ba{rcl} \wti{H}&=& \{\xi = c_1(\cc), \cc \in  \mbox{supp}(SW_{\wti N})\} \subset H^2(\wti{N};\R),\\
H&=& \frac{1}{k}\pi_*(\wti{H})\subset H^2(N;\R),\\
V&=& \mbox{set of vertices of $B_{N}^*$}\subset H^2(N;\R). \ea \]
It follows from the discussion in Section \ref{section:defxn} and Corollary \ref{cor:convhull}  that $V\subset H$ and that the convex hull of $H$ agrees with the convex hull of $V$, i.e. $B_{N}^*$.

Now let $\a\in H_2(M)$. We will write $\sigma=p_*\a$, $\wti{\a}=\pi^*\a$ and $\wti{\sigma}=\pi^*\sigma$.
We define
\[ \ba{rcl} \wti{H}_\sigma&:=&\{ h\in \wti{H}\,|\, h\cdot \wti{\sigma}=\|\wti{\sigma}\|_T\}, \\
{H}_\sigma&:=&\frac{1}{k}\pi_*(\wti{H}_{\sigma})\,\,\,\,\,\mbox{ and }\\
V_\sigma&:=&\{v\in V\,|\, v\cdot \sigma=\|\sigma\|_T\}.\ea \]
 Recall that $\|\wti{\sigma}\|_T=k\|\sigma\|_T$. It follows from the discussion in the proof of Corollary \ref{cor:convhull}  that $\wti{H}_\sigma$ is a non--empty set and that  $V_\sigma\subset H_\sigma$.

Since $b_1(N)>2$ we obtain from Theorem \ref{thm:baldridge} that for any $\cc \in \spinc(\wti N)$ the $4$--dimensional Seiberg-Witten invariant of the class ${\wti p}^{*} \cc \in \spinc(\wti M)$ is given by
\[ SW_{\wti M}({\wti p}^{*} \cc) = \sum_{l\in \Z} SW_{\wti N}(\cc+l\wti{e}),\] where ${\wti e} \in H^2({\wti N})$ is
the Euler class of the
$S^1$--bundle ${\wti p} \colon \wti{M}\to \wti{N}$.

\begin{claim}
If $e\not\in \Xi_N$, then there exists an $h = c_1(\cc) \in \wti{H}_\sigma$ such that $SW_{\wti{M}}(\wti p^*\cc)\ne 0$.
\end{claim}

To prove the claim we start by observing that, as $\wti{p}_*\wti{\a} = {\wti \sigma}$, it follows from the long exact sequence in Section \ref{section:prelim} that
$\wti{e}\cdot \wti{\sigma}=\wti{e}\cdot \wti{p}_*\wti{\a}=0$.
Now, denote by $F$ the face of $B_{N}^*$ which is dual to the open face of $B_{N}$ whose cone contains $\sigma$. Note that $F$ is the convex hull of the vertices in $V_\sigma$;
by the adjunction inequality for 3--manifolds, i.e. inequality (\ref{equ:auckly}),  the face $F$ contains all elements in $H_\sigma$.

Assume by contradiction that for all $\cc \in  \mbox{supp}(SW_{\wti N})$ such that $c_1(\cc) \in \wti{H}_\sigma$ we have $SW_{\wti{M}}(\wti p^*\cc)=0$.
This entails that for any such $\cc \in  \mbox{supp}(SW_{\wti N})$ there exists (at least) one $l\in \Z\sm \{0\}$ such that $\cc+l\wti{e}\in  \mbox{supp}(SW_{\wti N})$.
Since $\wti{e}\cdot \wti{\sigma}=0$, $c_1(\cc+l\wti{e}) = c_1(\cc) + 2l\wti{e}$ lies in fact in $\wti{H}_\sigma$.
Therefore for all $h \in {\wti H}_\sigma$ there exists an $l\in \Z\sm \{0\}$ such that $\frac{1}{k}\pi_{*} h+2l\cdot \frac{1}{k}\pi_*\wti{e}\in H_\sigma$.
As $V_\sigma\subset H_\sigma$, any $v\in V_\sigma$ is of the form $v=\frac{1}{k}\pi_*h$ for some $h\in \wti{H}_\sigma$.
Since furthermore $H_\sigma\subset F$ we see that for all $v\in V_\sigma$ there exists an $l\in \Z\sm \{0\}$ such that $v+2l\cdot \frac{1}{k}\pi_*(\wti{e})\in F$. Therefore the following observation, together with the definition of ${\mathcal E}_N$, implies that $\frac{1}{k}\pi_*(\wti{e})$ lies in ${\mathcal E}_N$.

\begin{observation}
Let $F$ be a  convex compact polyhedron and $w$ a non--zero vector with the property that for any vertex $v$ of $F$ there exists an $l\in \Z\sm \{0\}$ such that $v+lw\in F$. Then one of the edges of $F$ is parallel to $w$ and there exists a vertex $v$ such that $v+w$
 lies on an edge of $F$.
\end{observation}

(For the reader's convenience we give a quick outline of the proof: to each vertex $v$ of $F$ we can assign a well--defined positive (negative) sign if $v+lw\in F$ for some positive (negative) $l$. It is straightforward to see that not all vertices can have the same sign. 
Let $v_+$ and $v_-$ be adjacent vertices such that $v_+$ has positive sign and $v_-$ has negative sign.
Then it is not hard to see
that both $v_{+} +w$ and  $v_{-} -w$ lie on the edge connecting $v_+$ and $v_-$.)

As $q = 1$ (in the notation of Section \ref{section:prelim}), the Euler class of ${\wti M}$ is given by ${\wti e} = \pi^{*} e \in H^2({\wti N})$, hence $\pi_{*} {\wti e} = \pi_{*} \pi^{*} e = k e \in H^{2}(N)$. We conclude therefore that the image $e$ in $H^{2}(N;\R)$ lies in ${\mathcal E}_N$, i.e. $e \in \Xi_{N}$. This concludes the proof of the claim.

Now let $h = c_1(\cc) \in \wti{H}_\sigma$ with  $SW_{\wti{M}}(\wti p^*(\cc))\ne 0$.
By applying Theorem \ref{thm:adjs1} to ${\wti M}$ we  have
\[ \ba{rcl}  x(\wti{\a})&\geq & |\wti{\a}\cdot \wti{\a}|+\wti p^*h\cdot \wti{\a}
= |\wti{\a}\cdot \wti{\a}|+h\cdot \wti{p}_*\wti \a\\
&=& |\wti{\a}\cdot \wti{\a}|+h\cdot \wti{\sigma}= |\wti{\a}\cdot \wti{\a}|+\|\wti \sigma\|_T.\ea \] The theorem in the case $b_1(N)> 2$ and $e$  non--torsion now follows from applying
 (\ref{equ:facts}).

To complete the proof for nontorsion $e$ let us notice that in the case $b_1(N)=1$ we have $b_2(M)=0$, hence there is nothing to prove.
The case $b_1(N) = 2$ (hence $b_{2}^{+}(M) = 1$) can be reduced to the case of $b_1(N) > 2$:
this follows from the fact that
manifolds with virtually RFRS fundamental group are either virtually abelian or have infinite virtual Betti number (see \cite[p.~271]{Ag08}). In the former case, as $N$ is closed, the only possibility by classical results is that $N$ is covered
by $T^3$.
In any case there exist a cover $\pi \colon  {\wti N} \to N$ with $b_1({\wti N}) > 2$; then application of the (in--) equalities of (\ref{equ:facts})  allow us to reduce the statement for $ x(\a)$ to the one for $ x(\pi^{*}\a)$.

The case that $e=0$ can be proved analogously (going to a cover with $b_{+}(M) > 1$ if necessary) using the equality $ SW_{S^1 \times N}(p^*\cc)
=  SW_{N}(\cc)$.
Finally the case that $e$ is torsion can easily be reduced to the case $e=0$: indeed, it is well--known that
any $S^1$--bundle $M$ over a 3--manifold $N$ with torsion Euler class is finitely covered by a product $S^1\times \wti{N}$. (See e.g. \cite[Proposition~3]{Bow09} or \cite[Theorem~2.2]{FV11a} for a quick proof.) By the argument above, the lower bounds on the complexity of surfaces in the finite cover $S^1\times \wti{N}$ translate into lower bounds on the complexity of the original manifold $M$.
\end{proof}

\begin{remarks}
1. If $e \in \Xi_N$, then the fact that Lemma \ref{above} does not apply  does not by any means entail that the refined adjunction inequality fails to hold; it is simply not possible for us, with the relatively limited knowledge of the Seiberg-Witten invariants of ${\wti N}$, to get more information than the one gathered above.\\
2. Conceivably one could get better bounds on $x$ by using any finite cover of $M$, and not just the ones induced by pull--back of a finite cover of $N$.
The argument of the proof of Theorem \ref{kronans} will show that such covers will not give any further information.
\end{remarks}

\section{The minimal complexity function} \label{rest}

\subsection{Sharp bounds on the complexity of surfaces}\label{section:sharp}

In this section we will show that in many cases the inequality of Theorem \ref{mainthm} is in fact an equality.
This section partly builds on ideas from \cite{FV09}, which in turn was inspired by the results of \cite{Kr98}.

We start out with the following  definition from \cite{FV09}, which extends an earlier definition in \cite[Section~1.2]{Kr98} for product manifolds.
Given $\a \in H_2(M)$ we say that $\a$ has Property $(*)$ if there exists a
 (possibly disconnected) embedded  surface $\S \subset N$ and a (possibly disconnected) closed curve $c\subset N$ in general position with the following properties:
\bn
\item $\S$ is a Thurston norm minimizing surface dual to $p_*(\a)$,
\item the surface $\S$ lifts to a surface $\ti{\S}$ in $M$,
\item the singular surface $p^{-1}(c)\cup \ti{\S}$ represents $\a$,
\item the geometric intersection number of $\S$ and $c$ is given by the absolute value of the algebraic intersection number $\S\cdot c$.
\en
Note that, with appropriate orientations, we have $\a\cdot \a=2\,\S\cdot c$.
We have the following  lemma (cf. also \cite{Kr99}).

\begin{lemma}  \label{lem:starmin}
Let $\a\in H_2(M)$ that satisfies Property $(*)$, then there exists a surface $T$ which represents $\a$ with
\[ \chi_-(T)= |\a\cdot \a|+\|p_*\a\|_T.\]
\end{lemma}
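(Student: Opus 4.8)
The plan is to take the \emph{singular} surface $p^{-1}(c)\cup\wti{\S}$ provided by Property $(*)$ and turn it into an embedded surface by resolving its singularities, while keeping exact control of the Euler characteristic. First I would describe the singular set. Since $p|_{\wti\S}\colon\wti\S\to\S$ is a homeomorphism and $p^{-1}(c)$ is the disjoint union of the $p$--vertical tori sitting over the circles making up $c$, the two embedded surfaces $\wti\S$ and $p^{-1}(c)$ inside $M$ meet exactly over the points of $\S\cap c$, in a single point of $M$ over each of them, and transversally: in a local trivialisation $p^{-1}(U)\cong U\times S^1$ around the fibre over a point of $\S\cap c$ one sees $\wti\S$ as a section over $\S\cap U=\{y_1=0\}$ and $p^{-1}(c)$ as $\{y_2=y_3=0\}\times S^1$, and these meet transversally in one point. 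Writing $k$ for the geometric intersection number of $\S$ and $c$, condition (4) gives $k=|\S\cdot c|$, so $p^{-1}(c)\cup\wti\S$ has exactly $k$ singular points, each an ordinary transverse double point; and, since all local intersections of $\S$ and $c$ carry the same sign, the remark following the definition of Property $(*)$ gives $|\a\cdot\a|=2|\S\cdot c|=2k$.

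Next I would resolve these $k$ double points. Inside a small ball $B\cong\R^4$ about such a point, $p^{-1}(c)\cup\wti\S$ consists of two transverse $2$--disks whose boundary is an oriented Hopf link; replacing this pair of disks by an embedded oriented annulus in $B$ with the same oriented boundary (a Hopf band) yields a surface that is again embedded, closed, and homologous to $\a$, because the modification takes place inside $B$. Each such move lowers the Euler characteristic by $2$, so after carrying it out at all $k$ double points we obtain an embedded surface $T$ representing $\a$ with
\[ \chi(T)=\chi(\wti\S)+\chi\big(p^{-1}(c)\big)-2k. \]
Here $\chi(p^{-1}(c))=0$ because $p^{-1}(c)\to c$ is an $S^1$--bundle, and, choosing $\S$ without $S^2$--components (possible since $N$ is irreducible), we have $-\chi(\wti\S)=-\chi(\S)=\chi_-(\S)=\|p_*\a\|_T$ by condition (1). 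Therefore $-\chi(T)=\|p_*\a\|_T+2k=\|p_*\a\|_T+|\a\cdot\a|$.

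Finally I would check that $\chi_-(T)=-\chi(T)$, i.e. that no component of $T$ is a sphere. Starting from $p^{-1}(c)\sqcup\wti\S$, a disjoint union of tori and of closed surfaces of genus $\geq1$, perform the $k$ resolutions one at a time; each either fuses two distinct components along an annulus, giving a connected surface whose genus is the sum of the two genera, or (once pieces have been fused) resolves a self--intersection of a single component, which, done with the oriented annulus, adds a handle and raises that genus by one. In every case each component's genus can only increase, so every component of $T$ has genus $\geq1$; hence $\chi_-(T)=-\chi(T)=|\a\cdot\a|+\|p_*\a\|_T$, which is the assertion.

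The one step that really needs care is the first one: identifying the singular set of $p^{-1}(c)\cup\wti\S$ as exactly $k$ transverse double points with the standard Hopf--link local model, since that is where conditions (2)--(4) of Property $(*)$ — that $\S$ lifts, that $p^{-1}(c)\cup\wti\S$ represents $\a$, and that geometric $=$ $|$algebraic$|$ intersection number — together with the identity $\a\cdot\a=2\,\S\cdot c$ all enter. Everything after that is Euler characteristic bookkeeping.
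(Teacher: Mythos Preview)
Your argument is correct and follows the same route as the paper: resolve the transverse double points of $p^{-1}(c)\cup\wti\S$ by annuli and compute the resulting complexity. The paper's proof is terser --- it simply asserts that ``each replacement increases the complexity by $2$'' and writes down $\chi_-(T)=\chi_-(\wti\S)+\chi_-(p^{-1}(c))+2|\S\cdot c|$ --- whereas you work with $\chi$ and then separately verify that $T$ has no sphere components, which is a cleaner way to justify the same formula. Your appeal to irreducibility of $N$ to discard $S^2$--components of $\S$ is not stated in the lemma itself, but it is the ambient hypothesis throughout the paper and is implicitly needed in the paper's proof as well (the ``complexity increases by $2$'' claim fails if a sphere is being fused).
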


\begin{proof}
Let $\S,\ti{\S}$ and $c$ as in the definition of Property ($*$).
 Around each singular point  of $p^{-1}(c)\cup \ti{\S}$ we can replace a pair of transverse disks with an embedded annulus having the same oriented boundary. Note that each replacement increases the complexity by 2. We therefore obtain a smooth surface $T$ representing the class dual to $\a$ with
\[ \chi_-(T)=\chi_-(\ti{\S})+\chi_-(p^{-1}(c))+2 |\S \cdot c|.\]
Note that $p^{-1}(c)$ is a union of tori, hence $\chi_-(p^{-1}(c))=0$. As
$2~\S\cdot c=\a\cdot \a$, we get
\[ \chi_-(T)= |\a\cdot \a|+\|p_*\a\|_T.\]
\end{proof}

We now have the following lemma.

\begin{lemma} \label{lem:getstar}
Let $M\to N$ be a principal $S^1$--bundle with Euler class $e$.
Let $\a \in H_2(M)$. We write $H:=H_1(N;\Z)/\mbox{torsion}$ and
$\sigma:=p_*\a\in H_2(N)\cong H^1(N;\Z)=\hom(H,\ll t\rr)$.
If  $\sigma(\Delta_N)\ne 0$, then $\a$ has Property $(*)$.
\end{lemma}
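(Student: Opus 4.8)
The goal is to produce the surface $\S\subset N$, the curve $c\subset N$, and the lift $\wti{\S}\subset M$ witnessing Property $(*)$ for $\a$, under the hypothesis $\sigma(\Delta_N)\neq 0$ where $\sigma = p_*\a$. The first observation is that Property $(*)$, conditions (1)--(4), really asks for two things: a well-chosen Thurston norm minimizing surface $\S$ dual to $\sigma$ that \emph{lifts} to $M$ (equivalently, $[\S]\in H_2(N)$ maps to a class in the image of $p^*\colon H_2(N)\to$ \dots, or rather, $\S$ bounds in the appropriate sense so that $p^{-1}(\S)$ has a component mapping homeomorphically), and a curve $c$ realizing the ``defect'' $\a - \wti{\S}$ in $H_2(M)$, chosen in minimal geometric position with respect to $\S$. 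So I would split the argument accordingly.

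\textbf{Step 1: choosing $\S$ via the Alexander polynomial hypothesis.} The role of $\sigma(\Delta_N)\neq 0$ is, I expect, to guarantee the existence of a Thurston norm minimizing surface $\S$ dual to $\sigma$ that is ``untangled'' with respect to the infinite cyclic cover determined by $\sigma\colon H\to \ll t\rr$ — concretely, a connected nonseparating surface (or a minimal-complexity surface each of whose components is nonseparating in its relevant cover), which is precisely the condition needed for $\S$ to lift to $M$. The point is that $\sigma$ factors through $H_1(N;\Z)\to\Z$, the corresponding $\Z$-cover has Alexander polynomial $\sigma(\Delta_N)$, and its being nonzero forces $b_1$ of the cover to be finite / the cover to be ``small'' enough that one can find such a surface; this is a standard consequence (see the work the authors cite on Thurston norm and Alexander polynomials, e.g. \cite{FV10}). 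I would invoke that to get $\S$ satisfying (1), and note that a surface dual to an integral class $\sigma$ which is a ``fiber-like'' (nonseparating, each component dual to a primitive class after passing to components) lifts to the total space $M$ of the circle bundle, giving (2); the lift $\wti{\S} = p^{-1}(\S)$ restricted to one sheet works because $\langle e, [\S]\rangle = e\cdot\sigma$ and one checks this vanishes since $\sigma = p_*\a$ lies in $\ker(\cdot e)$ by the Gysin sequence recalled in Section~\ref{section:prelim}.

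\textbf{Step 2: producing $c$ and arranging minimal intersection.} Having fixed $\wti{\S}$ with $p_*[\wti{\S}] = \sigma = p_*\a$, the class $\a - [\wti{\S}]$ lies in $\ker(p_*\colon H_2(M)\to H_2(N))$, which by the Gysin sequence is the image of $H_1(N)\xrightarrow{} H_2(M)$; that image is spanned by classes of the form $p^{-1}(c)$ for $c$ a closed curve in $N$ (the preimage of a $1$-cycle is a $2$-cycle, a union of tori). So pick $c$ with $[p^{-1}(c)] = \a - [\wti{\S}]$, giving (3) after a general-position perturbation; then $p^{-1}(c)\cup\wti{\S}$ represents $\a$. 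Finally, to get (4) — geometric intersection number of $\S$ and $c$ equal to $|\S\cdot c|$ — I would homotope $c$ within $N$ to minimize its intersections with the Thurston norm minimizing surface $\S$; the standard fact here is that a Thurston norm minimizing surface in an irreducible $3$-manifold is $\pi_1$-injective and incompressible, so any closed curve can be homotoped to meet it in the minimal (= algebraic, up to sign, on each component) number of points. The relation $\a\cdot\a = 2\,\S\cdot c$ then drops out of the intersection pairing on $M$: $(\wti\S + p^{-1}(c))^2 = \wti\S^2 + 2\,\wti\S\cdot p^{-1}(c) + p^{-1}(c)^2$, where $\wti\S^2 = 0$ (it lifts, so $\langle e,[\S]\rangle=0$ forces self-intersection zero), $p^{-1}(c)^2 = 0$ (tori, and $e\cdot(\text{class of }c)$ enters but $p^{-1}(c)$ is a fiberwise preimage with trivially framed normal bundle in the relevant sense), and $\wti\S\cdot p^{-1}(c) = \S\cdot c$.

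\textbf{Main obstacle.} The delicate point is Step 1: extracting from $\sigma(\Delta_N)\neq 0$ a Thurston norm minimizing surface dual to $\sigma$ that genuinely lifts to the circle bundle. One has to be careful about (a) what happens on the components of $\S$ and the possibility that $\sigma$ is not primitive, (b) reconciling ``lifts to $M$'' (a condition about $\langle e, \cdot\rangle$ and about the $\Z$-fold fiber structure) with ``nonseparating in the $\sigma$-cover'' (the condition the Alexander polynomial controls), and (c) the case $\sigma = 0$, which must be handled separately (then $\a\cdot\a = 0$, $\|p_*\a\|_T = 0$, and Property $(*)$ should hold trivially with $\S = \emptyset$ and $c$ chosen so $p^{-1}(c)$ represents $\a$). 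I expect the cleanest route is to reduce to $\sigma$ primitive by a standard trick, invoke the Alexander-polynomial criterion for the existence of a connected nonseparating norm-minimizing surface in the corresponding $\Z$-cover, and then verify lifting via the Gysin sequence as above; everything after that is routine $3$- and $4$-manifold general-position topology.
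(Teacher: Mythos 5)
Your outline has the right shape---Gysin sequence for the lifting, Alexander polynomial for the existence of a good norm-minimizing surface, general position for the rest---but the step you dismiss as ``routine general-position topology'' is exactly where the hypothesis $\sigma(\Delta_N)\ne 0$ is doing its work, and the argument you give for condition (4) is incorrect. You claim incompressibility of $\S$ lets you homotope $c$ to meet $\S$ in the algebraic number of points ``up to sign, on each component.'' That count is $\sum_i |[\S_i]\cdot[c]|$, which is strictly larger than $|\S\cdot c|$ whenever components of $\S$ meet $c$ with opposite signs; and even for connected $\S$ the minimal geometric intersection over the \emph{homotopy} class of $c$ is the translation length of $[c]$ in the Bass--Serre tree for the splitting along $\pi_1(\S)$, which in general exceeds $|\S\cdot c|$. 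So incompressibility is the wrong tool.

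The paper's proof arranges for \emph{connectedness} first, and that is the whole point. By \cite[Proposition~3.6]{FV08}, $\sigma(\Delta_N)\ne 0$ implies the one-variable Alexander polynomial of $N$ along $\sigma$ is nonzero, and then \cite[Proposition~6.1]{McM02} gives a \emph{connected} Thurston norm minimizing surface $\S$ dual to the primitive class $\frac{1}{n}\sigma$, where $n=\div(\sigma)$. Because $\S$ is connected, one may choose the representative $c$ of the required class in $H_1(N)$---you get to pick $c$ within its homology class, not merely homotope a fixed $c$---so that pairs of intersection points of opposite sign are cancelled by splicing $c$ along a path in $\S$ joining them, which changes $c$ only by a boundary. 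One then lifts $\S$ to $M$ using $e\cdot\sigma=0$ (as you correctly observe from the Gysin sequence) and takes $n$ parallel copies of $\S$ and of its lift, so $n\S$ still meets $c$ in exactly $|n\S\cdot c|$ points. Without McMullen's connected surface your construction can genuinely fail: a norm-minimizing surface dual to $\sigma$ may have several components and then no choice of $c$ realizes $|\S\cdot c|$ geometrically. You flagged this in your ``main obstacle'' paragraph, but it is not a case to be tidied up afterward; it is the content of the lemma, and your Step 2 as written would not close the gap.
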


\begin{proof}
We write $n=\div(\sigma)$.
The assumption that $\sigma(\Delta_N)\ne 0$ implies by standard arguments (see e.g. \cite[Proposition~3.6]{FV08})
that the one variable Alexander polynomial of $N$ corresponding to $\sigma$ is non--zero.
It then follows from  \cite[Proposition~6.1]{McM02}
that there exists a connected Thurston norm minimizing surface $\S$ dual to the primitive class $\frac{1}{n}\sigma$.
By the Gysin sequence, $e\cdot \sigma=e\cdot p_*\a=0$.
The restriction of the $S^1$--bundle to $\S$ is therefore trivial, in particular
$\S$ lifts to a surface $\wti{\S}$ in $M$.
We now denote by $n\S$ and $n\wti{\S}$ the union of $n$ parallel copies of $\S$ and $\wti{\S}$.

It follows from the Gysin sequence  that we can find an embedded curve $c\subset N$ such that the class dual to
$\a$ is given by $[n\ti{\S}]+[p^{-1}(c)]$. We can assume that $c$ is in general position with $n\S$, hence $n\ti{\S}$ and $p^{-1}(c)$ are in general position.
It is not hard to see that as $\S$ is connected we can  choose (a possibly disconnected) $c$ such that the geometric intersection number of $\S$ and $c$ is given by the absolute value of $\S\cdot c$,
in particular such that the geometric intersection number of $n\S$ and $c$ is given by the absolute value of $n\S\cdot c$.
\end{proof}

\begin{remark}
Let $H$ be a torsion--free group and $p\in \Z[H]$ a non--zero polynomial.
We write $p=\sum_{h\in H}a_hh$. We consider the  Newton polyhedron of $p$,
i.e. the convex hull of $\{ h\in H\,|\, a_h\ne 0\}\subset H\otimes \R$. If for all faces $F$ of the Newton polyhedron we have
\[ \sum_{h\in F\cap H} a_h\ne 0,\]
then it is straightforward to see that for any $\phi:H\to \ll t\rr$ we have $\phi(p)\ne 0$.
\end{remark}

Note that the previous lemma together with Theorem \ref{mainthm} implies Corollary \ref{cor:mingen}, i.e. they imply that for `most' $S^1$--bundles over nice 3--manifolds with virtually RFRS fundamental group the equality
\[ x(\a)=|\a\cdot \a|+\|p_*\a\|_T\]
holds. The following question arises:

\begin{question}
Let $M$ be an $S^1$--bundle over a 3--manifold $N$.
Does the equality
\[ x(\a)=|\a\cdot \a|+\|p_*\a\|_T\]
hold for all $\a\in H_2(M)$?
\end{question}

Even though we just showed that we can answer the question in the affirmative for many cases,
we will now discuss an example that make us think that in general an affirmative answer is far from being granted.

\begin{lemma}
Let $N$ be a closed hyperbolic 3--manifold which admits a primitive class $\sigma\in H_2(N)$
which has a unique  Thurston norm minimizing surface $\S$ which consists of two homologically essential components $\S_1$ and $\S_2$.
Then we can find classes $\a\in H_2(S^1\times N)$, that can be chosen to have zero or positive self--intersection,
that do not have Property $(*)$.
\end{lemma}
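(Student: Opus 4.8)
The plan is to exploit the rigidity coming from the \emph{uniqueness} of the Thurston norm minimizing surface $\S = \S_1 \cup \S_2$ in the class $\sigma$. First I would single out which homology class $\a \in H_2(S^1 \times N)$ to test: choose $\a$ with $p_*\a = \sigma$ and with prescribed self-intersection $2m$ for some $m \in \Z$ (taking $m = 0$ in the zero self-intersection case, $m > 0$ otherwise); concretely, $\a$ is represented, up to the usual annulus-smoothing, by $\wti\S \cup p^{-1}(c)$ for a curve $c \subset N$ with $\S \cdot c = m$. Since the self-intersection is $2\,\S\cdot c$, this pins down the intersection data $\S_i \cdot c$ subject to $(\S_1 + \S_2)\cdot c = m$.

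The heart of the argument is then the following dichotomy. Suppose $\a$ \emph{did} have Property $(*)$. By definition there would be a Thurston norm minimizing surface $\S'$ dual to $\sigma$ that lifts to $S^1 \times N$ and a curve $c'$ with $p^{-1}(c') \cup \wti{\S'}$ representing $\a$ and with the geometric/algebraic intersection numbers of $\S'$ and $c'$ agreeing. By uniqueness of the norm minimizer, $\S' = \S$ (as unoriented surfaces), so $\S' = \S_1 \cup \S_2$. The key point I would push is that Property $(*)$ forces a \emph{compatibility} between how $\a$ distributes over the components $\S_1, \S_2$ and the intersection numbers $\S_i \cdot c'$: writing $\a = a_1[\text{piece over }\S_1] + a_2[\text{piece over }\S_2] + [\text{vertical piece}]$, the vertical contribution $[p^{-1}(c')]$ must be consistent both with $p_*\a = [\S_1]+[\S_2]$ (so $a_1 = a_2 = 1$) and with $\a \cdot \a = 2\,\S\cdot c' = 2(\S_1 \cdot c' + \S_2 \cdot c')$ realized geometrically. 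Because $\S_1, \S_2$ are each \emph{homologically essential} and distinct, I can choose $\a$ so that the two individual products $\S_1 \cdot c$ and $\S_2 \cdot c$ that are forced by $\a$ have opposite signs (or are such that no single curve $c'$ meeting $\S = \S_1 \cup \S_2$ efficiently can realize them), while the algebraic total $\S \cdot c = m$ is as prescribed — this is possible precisely because cancellation between $\S_1 \cdot c'$ and $\S_2 \cdot c'$ is invisible to $\a \cdot \a$ but visible to the geometric intersection number. That contradicts condition (4) in the definition of Property $(*)$.

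In more detail, the steps in order: (i) use the Gysin sequence for $S^1 \times N \to N$ to parametrize the classes $\a$ with $p_*\a = \sigma$ and to record that $e \cdot \sigma = 0$ automatically (here $e = 0$), so every such $\a$ is represented by a lift of a norm-minimizer plus a vertical torus piece $p^{-1}(c)$; (ii) compute $\a \cdot \a = 2\,\S \cdot c$ and observe this depends only on the \emph{algebraic} total intersection $\S_1 \cdot c + \S_2 \cdot c$; (iii) invoke hyperbolicity (so that $N$ has no essential tori/annuli and $\|\sigma\|_T > 0$, and $\S_1, \S_2$ are $\pi_1$-injective incompressible surfaces) together with uniqueness of the minimizer to argue that \emph{any} surface realizing Property $(*)$ for $\a$ must be isotopic to $\S_1 \cup \S_2$; (iv) choose $c$ (hence $\a$) so that $\S_1 \cdot c = m+1$, $\S_2 \cdot c = -1$, say (both nonzero, using that each $\S_i$ is homologically essential so such a curve exists), giving the same $\a$ and $\a\cdot\a = 2m$ but forcing the geometric intersection number of $\S$ with any admissible $c'$ to be at least $|m+1| + |-1| = |m| + 2 > |m| = |\S \cdot c'|$; (v) conclude Property $(*)$ fails.

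The main obstacle I anticipate is step (iii)–(iv): making rigorous the claim that Property $(*)$ genuinely constrains the \emph{per-component} intersection numbers, not just their algebraic sum. One must rule out that a cleverly chosen $c'$ (different from $c$, but homologous rel the constraint) could meet $\S_1 \cup \S_2$ efficiently while still producing the chosen $\a$; this is where uniqueness of $\S$ is essential, since it prevents trading the two-component minimizer for a connected one on which all intersections could have the same sign. I would handle this by a careful bookkeeping of the homology class of the vertical part $[p^{-1}(c')] \in H_2(S^1\times N)$ in terms of $[c'] \in H_1(N)$ and the intersection form, showing $[c']$ is determined modulo the radical by $\a$, and that its pairing with each $[\S_i] \in H^1(N)$ — which \emph{is} determined — cannot be simultaneously realized geometrically and efficiently against a \emph{disconnected} incompressible surface whose components are forced. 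Establishing that last incompatibility cleanly is the crux; everything else is routine Gysin-sequence and norm bookkeeping.
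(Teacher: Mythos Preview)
Your approach is essentially the one in the paper: fix $\a=([c],\sigma)$ via the K\"unneth splitting $H_2(S^1\times N)\cong H_1(N)\oplus H_2(N)$, note that uniqueness of the minimizer forces any Property~$(*)$ surface to be $\S_1\cup\S_2$ and any admissible $c'$ to satisfy $[c']=[c]$, and then arrange that $\S_1\cdot c$ and $\S_2\cdot c$ have opposite signs so that the geometric count strictly exceeds $|\S\cdot c|$. The paper does exactly this (with slightly different numerology).

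There is, however, a genuine gap in your step~(iv). You write ``choose $c$ so that $\S_1\cdot c=m+1$, $\S_2\cdot c=-1$ \dots\ using that each $\S_i$ is homologically essential so such a curve exists.'' Homological essentiality of each $\S_i$ is \emph{not} enough to prescribe the two pairings independently: if $[\S_1]$ and $[\S_2]$ happened to be linearly dependent in $H_2(N;\Q)$, then $\S_1\cdot c$ and $\S_2\cdot c$ would be proportional for every $c$, and you could never force them to have opposite signs. What you need is that $[\S_1]$ and $[\S_2]$ are linearly \emph{independent}, and this is a separate claim requiring proof. The paper's argument is short but uses all the hypotheses: if $[\S_1],[\S_2]$ span a cyclic group then primitivity of $\sigma=[\S_1]+[\S_2]$ forces $[\S_i]=k_i\sigma$ with $k_i\ne 0$, whence
\[
\chi_-(\S_1)\ \geq\ \|k_1\sigma\|_T\ =\ |k_1|\,\bigl(\chi_-(\S_1)+\chi_-(\S_2)\bigr)\ >\ \chi_-(\S_1),
\]
the strict inequality coming from $\chi_-(\S_i)>0$ since $N$ is hyperbolic. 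Once you insert this claim, your outline is correct and matches the paper; the remaining worry you flag (that $[c']$ is determined by $\a$) is handled cleanly by the K\"unneth decomposition, exactly as you suspected.
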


\begin{proof}
Let $N$ be a closed hyperbolic 3--manifold which admits a primitive class $\sigma\in H_2(N)$
which has a unique  Thurston norm minimizing surface $\S$ which consists of two homologically essential components $\S_1$ and $\S_2$.
We denote by $\sigma_i, i=1,2$ the homology class represented by $\S_i$.
Note that $\S_1,\S_2$ can not be tori since we assumed that $N$ is hyperbolic.

\begin{claim}
The classes $\sigma_1$ and $\sigma_2$ define linearly independent elements in $H_2(N)$.
\end{claim}

If $\sigma_1$ and $\sigma_2$ do not define linearly independent elements, then
$\sigma_1,\sigma_2$ span an infinite cyclic subgroup which clearly contains $\sigma$.
Since $\sigma$ is primitive it follows that $\sigma_i=k_i\sigma$ for some $k_i\in \Z\sm \{0\}$.
But then
\[ \chi_-(\S_1)\geq \|\sigma_1\|_T=\|k_1\sigma\|_T=|k_1|(\chi_-(\S_1)+\chi_-(\S_2))>\chi_-(\S_1),\]
which is a contradiction.

Since $\sigma_1$ and $\sigma_2$ are linearly independent we can by Poincar\'e duality find oriented curves $c_i, i=1,2$ such
that $\sigma_i\cdot c_i=l_i$ for some $l_i\ne 0\in \N $ and such that $\sigma_i\cdot c_{3-i}=0$. Fix now $m \in \N$,
and consider $c$ the union of $l_2+m$ parallel copies of $c_1$ and $l_1$ copies of $-c_2$.
Note that
$c\cdot \S_1=(l_2+m)l_1$, $c\cdot \S_2=-l_1l_2$ and thus $c\cdot \S=ml_1$.

Note that any curve $d$ representing $[c]$ has to intersect $\S_1$ at least $(l_2+m)l_1$ times
and $\S_2$ at least $l_1l_2$ times. It now follows from the uniqueness property of $\S=\S_1\cup \S_2$ and the above discussion
that the class
in $H_2(S^1\times N)$ which corresponds to $([c],\sigma)$ under the K\"unneth isomorphism
$H_2(S^1\times N)\cong H_1(N)\oplus H_2(N)$ does not have Property $(*)$.
The lemma now follows from $\a\cdot \a=2\,c\cdot \S=2ml_1$, which depending on the choice of $m$ will be either positive or zero.
\end{proof}

In the proof of the previous lemma we identified in particular
classes $\a \in H_2(S^1\times N)$ such that Kronheimer's lower bounds (and our bounds if $\pi_1(N)$ is virtually RFRS)
imply that
\[ x(\a)\geq |\a\cdot \a|+\|p_*\a\|_T=2ml_1+\chi(\S).\] As the construction of Lemma \ref{lem:starmin} fails to apply,
it is not clear whether there exists a surface in $S^1\times N$ which represents $\a$ and which has complexity less than $4l_1l_2+2ml_1+\chi(\S)$.

If such a surface fails to exists, the following lemma would have the rather remarkable consequence that $x'$ is not as well-behaved as the Thurston norm:

\begin{lemma}
Let $N$ be a closed 3--manifold such that $\pi_1(N)$ is virtually RFRS and suppose that there exists a class $\a \in H_2(S^1\times N)$
such that
\[ x(\a)> |\a\cdot \a|+\|p_*\a\|_T,\]
then either the  function $x'$ does not define a seminorm on a finite cover of $S^1\times N$ or $x'$ is not multiplicative under finite covers.
\end{lemma}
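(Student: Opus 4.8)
The idea is to argue by contradiction: suppose that $x'$ both defines a seminorm on every finite cover of $S^1\times N$ and is multiplicative under finite covers. We will then show that $x'$ on $S^1\times N$ itself must coincide with the function $\a\mapsto \|p_*\a\|_T$, contradicting the existence of the class $\a$ with strict inequality $x(\a)>|\a\cdot\a|+\|p_*\a\|_T$ (note $p_*\a$ here refers to the projection $S^1\times N\to N$, and the offending $\a$ has $\a\cdot\a=0$ if we pick the one from the previous lemma with $m=0$, or in any case one reduces the comparison to the norm part). First I would recall that by Lemma \ref{above} (applicable since $\pi_1(N)$ is virtually RFRS and $e=0\notin\Xi_N$, after passing to a cover with $b_+>1$ if necessary) we already have the inequality $x(\a)\ge|\a\cdot\a|+\|p_*\a\|_T$ for all classes; so $x'(\a)\ge\|p_*\a\|_T$ always. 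The content is to get the reverse inequality from the two assumed properties.

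The key step is to exploit Agol's cover. Let $\pi\colon\wti N\to N$ be the finite cover produced by Lemma \ref{lem:agol}, so that every nonzero class in $H_2(N;\R)$ pulls back to (the closure of) a fibered cone of $\wti N$; and let $\wti M=S^1\times\wti N$ with the induced cover $\pi\colon\wti M\to M$ of degree $k=\deg\pi$. For a class $\a\in H_2(M)$ write $\sigma=p_*\a$ and $\wti\a=\pi^*\a$, $\wti\sigma=\pi^*\sigma$. Since $\pi^*\sigma$ lies in the closure of a fibered cone of $\wti N$, the class $\wti\sigma$ is carried by a (quasi)fibration $\wti N\to S^1$, and one checks (as in Lemma \ref{lem:getstar}, using that a fibered class is automatically represented by a connected norm-minimizing surface to which the trivial $S^1$-bundle restricts, and that $e=0$) that $\wti\a$ has Property $(*)$ in $\wti M$. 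Hence Lemma \ref{lem:starmin} applies and gives $x(\wti\a)=|\wti\a\cdot\wti\a|+\|\wti\sigma\|_T$, i.e. $x'(\wti\a)=\|\wti\sigma\|_T=k\|\sigma\|_T$ by multiplicativity of the Thurston norm. Now invoke multiplicativity of $x$ under finite covers (our standing assumption): $x'(\wti\a)=x'(\pi^*\a)=k\,x'(\a)$. Combining, $k\,x'(\a)=k\|\sigma\|_T$, so $x'(\a)=\|p_*\a\|_T$ for \emph{every} $\a\in H_2(M)$ — but this contradicts the hypothesis that $x'(\a)>\|p_*\a\|_T$ for some class. (The seminorm hypothesis enters only to make the multiplicativity statement "$x$ is multiplicative under finite covers" even meaningful in the form we need, namely that the seminorm structure is preserved; if one prefers, one can phrase the contradiction as: either $x'$ fails to be a seminorm on $S^1\times N$, or the displayed equality forces a contradiction with the given $\a$.)

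More carefully, the logical structure is: assume \emph{both} that $x'$ is a seminorm on every finite cover \emph{and} that $x$ (equivalently $x'$, since $\a\cdot\a$ scales correctly) is multiplicative under finite covers; derive $x'(\a)=\|p_*\a\|_T$ on $S^1\times N$; contradiction. So at least one of the two properties must fail, which is exactly the statement. I would present it in that form to match the "either...or" phrasing of the Lemma. The only place seminormality is genuinely used is to ensure linearity on rays of $x'$, which lets us pass freely between a class and its multiples when checking that $\pi^*\sigma$ lands in a fibered cone rather than merely its closure — i.e. to reduce to a rational fibered class where Lemma \ref{lem:getstar}/\ref{lem:starmin} apply cleanly, then extend by linearity. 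Without linearity on rays one cannot conclude $x'(\a)=\|p_*\a\|_T$ for all $\a$ from knowing it on a cofinal set of rays.

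**Main obstacle.** The delicate point is verifying that $\wti\a$ has Property $(*)$ in $\wti M=S^1\times\wti N$ — i.e. that the conclusion of Lemma \ref{lem:getstar} is available. Lemma \ref{lem:getstar} as stated needs the one-variable Alexander polynomial of $\wti N$ in the direction $\wti\sigma$ to be nonzero, which we do \emph{not} know a priori for all of $\wti N$; however, because $\wti\sigma$ lies in the closure of a \emph{fibered} cone of $\wti N$ (this is the whole point of Agol's cover), the direction $\wti\sigma$ is quasifibered and hence the relevant Alexander polynomial is automatically nonzero (a fibered class has monic, in particular nonzero, Alexander polynomial in that direction; a quasifibered one inherits nonvanishing). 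So the obstacle is really bookkeeping: one must handle the boundary-of-the-cone (quasifibered, not honestly fibered) case, where the minimal surface may be only a virtual fiber, and check that the lift-and-tube construction of Lemma \ref{lem:starmin} still produces a surface of complexity exactly $|\wti\a\cdot\wti\a|+\|\wti\sigma\|_T$. This is essentially the argument already run in the proof of Lemma \ref{above}, transported to the $4$-manifold side, so it should go through, but it is the step that requires care rather than the clean algebraic manipulation of degrees.
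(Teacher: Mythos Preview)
Your proposal has a genuine gap, and it is precisely the point you yourself flag as the ``main obstacle'' and then wave away.

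You argue that $\wti\a$ has Property~$(*)$ in $\wti M=S^1\times\wti N$ because $\wti\sigma=\pi^*\sigma$ is quasifibered, and you assert that a quasifibered class has nonzero one--variable Alexander polynomial. This assertion is not justified and is in general false: a class on the \emph{boundary} of a fibered cone can very well have $\phi(\Delta_{\wti N})=0$ (the specialization vanishes whenever $\phi$ is constant on a face of the Newton polytope whose coefficient sum is zero). More to the point, the whole setup of the preceding lemma in the paper is exactly a situation where the unique Thurston--norm--minimizing surface for $\sigma$ is \emph{disconnected}, so the connectedness conclusion of Lemma~\ref{lem:getstar} is unavailable and Property~$(*)$ genuinely fails. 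There is no reason to expect that passing to the Agol cover repairs this for the boundary class $\pi^*\sigma$.

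The paper's proof avoids this entirely. It does not attempt to show that $\pi^*\a$ itself has Property~$(*)$. Instead it observes that $\pi^*\a$ lies on the boundary of an \emph{open} cone $V$ of honestly fibered classes, and for integral $\wti\b$ in the interior of $V$ one has $x'(\wti\b)=\|\wti p_*\wti\b\|_T$ by Kronheimer \cite[Corollary~2]{Kr99}. The seminorm hypothesis is then used in an essential way --- not merely for linearity on rays as you suggest, but for continuity --- to pass to the limit and conclude $x'(\pi^*\a)=\|\wti p_*\pi^*\a\|_T$. Your remark that ``linearity on rays \ldots lets us pass freely between a class and its multiples when checking that $\pi^*\sigma$ lands in a fibered cone rather than merely its closure'' misidentifies the role of the hypothesis: no multiple of a boundary class lies in the interior, so linearity on rays alone buys nothing here. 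Once the equality is established upstairs, the multiplicativity hypothesis gives $x'(\a)=\|p_*\a\|_T$ and the contradiction follows as you indicate.
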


\begin{proof}
Let $N$ be a closed 3--manifold such that $\pi_1(N)$ is virtually RFRS and suppose that there exists a class $\a\in H_2(S^1\times N)$
such that
\[ x(\a)> |\a\cdot \a|+\|p_*\a\|_T.\]
Let us assume that  the function $x'$ defines a seminorm on all  finite covers of $S^1\times N$ and that $x'$ is  multiplicative under finite covers.

Since $\pi_1(N)$  is virtually RFRS, there exists,  by Agol's theorem,   a finite cover $\pi:\wti{N}\to N$ such that $\pi^*(p_{*} \a)$ is quasi--fibered.
In particular there exists an open cone $V\subset H_2(S^1\times \wti{N};\R)$ with the following two properties:
\bn
\item for any class $\wti{\b}\in V\cap H_2(S^1\times \wti{N};\Z)$ we have
\[ x(\wti{\b})=|\wti{\b}\cdot \wti{\b}|+\|{\wti p}_*{\wti{\b}}\|_T \]
(see \cite[Corollary~2]{Kr99} for details);
\item $\pi^*\a$ lies on the boundary of $V$.
\en
By our assumption $x'$ is a seminorm on $H_2(S^1\times \wti{N})$; it thus follows that $x'(\pi^*\a)=\|{\wti p}_* \pi^*\a\|_T$.
Similarly, as we assume that  $x'$ is multiplicative under finite covers,
it now  follows that $x'(\a)=\|p_*\a\|_T$, i.e.
\[ x(\a)=|\a\cdot \a|+\|p_*\a\|_T,\]
which is a contradiction to our hypothesis.
\end{proof}

\subsection{Multiplicativity under finite covers} \label{kron}

The following theorem determines sufficient conditions under which the complexity function is multiplicative under finite covers and implies, together with Corollary \ref{cor:mingen}, Theorem \ref{kronansintro}.
As we take arbitrary finite covers, we will see from the proof that iteration of the argument of Theorem \ref{above} imposes us to exclude from the statement infinitely many choices of $e \in H^2(N)$, precisely the classes contained in $\Theta_{N}$.

\begin{theorem} \label{kronans} Let $N$ be a closed irreducible $3$-manifold with virtually RFRS fundamental group.
Let  $p \colon  M \to N$ be a circle bundle of Euler class $e \in H^2(N) \setminus \Theta_{N}$ and let $\a\in H_2(M)$ such that $ x(\alpha) = |\a \cdot \a| +  \|p_{*} \a \|_T$.
Then for any finite cover $\rho_{M} :\widehat{M}\to M$ we have $x(\rho_{M}^*\alpha)=\deg(\rho_{M})\cdot x(\alpha)$.
\end{theorem}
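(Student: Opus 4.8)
The plan is to prove the two inequalities $x(\rho_M^*\a)\le \deg(\rho_M)\cdot x(\a)$ and $x(\rho_M^*\a)\ge \deg(\rho_M)\cdot x(\a)$ separately. The first is immediate from the first line of \eqref{equ:facts}, which holds for any finite cover. So the content is the reverse inequality, and here the hypothesis $x(\a)=|\a\cdot\a|+\|p_*\a\|_T$ is what we exploit: it suffices to show
\[ x(\rho_M^*\a)\ge \deg(\rho_M)\big(|\a\cdot\a|+\|p_*\a\|_T\big)=|\rho_M^*\a\cdot\rho_M^*\a|+\|\widehat p_*\rho_M^*\a\|_T, \]
where the equality uses the second and third lines of \eqref{equ:facts} (with $\widehat p\colon\widehat M\to\widehat N$ the induced $S^1$--bundle and noting $\widehat p_*\rho_M^*\a=\rho_N^*p_*\a$). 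In other words, the reverse inequality is exactly the refined adjunction inequality of Lemma \ref{above} applied to $\widehat M$. Thus the theorem reduces to checking that Lemma \ref{above} is applicable to the circle bundle $\widehat p\colon\widehat M\to\widehat N$, i.e. that $\widehat N$ is irreducible with virtually RFRS fundamental group and that the Euler class $\widehat e\in H^2(\widehat N)$ of $\widehat p$ lies outside $\Xi_{\widehat N}$.

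The first two conditions are standard: irreducibility is inherited by finite covers of irreducible $3$--manifolds (by the sphere theorem / equivariant considerations, or since $\pi_2$ vanishes), and RFRS is inherited by finite-index subgroups by definition, so $\pi_1(\widehat N)$ is virtually RFRS. The real work is the Euler class condition. By \eqref{equ:coverq} we have $q\,\widehat e=\rho_N^*e$ in $H^2(\widehat N)$ for the integer $q$ of Section \ref{section:prelim}; in $H^2(\widehat N;\R)$ this says $\widehat e$ is a positive rational multiple of $\rho_N^*e$. So I must show: if $e\notin\Theta_N$, then $\rho_N^*e\notin\R_+\cdot\mathcal E_{\widehat N}$, equivalently $\rho_N^*(\R_+\cdot e)\cap\mathcal E_{\widehat N}=\varnothing$. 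The key geometric fact is that the transfer $\rho_N^*\colon H^2(N;\R)\to H^2(\widehat N;\R)$ is, up to the scale $\deg\rho_N$, an isometry of dual Thurston norms that carries the dual norm ball $B_N^*$ isometrically onto a subset of $B_{\widehat N}^*$, and carries vertices, edges and faces of $B_N^*$ into the corresponding structure of $B_{\widehat N}^*$ (a face of $B_{\widehat N}^*$ restricted to $\rho_N^*H^2(N;\R)$ is a face of the image of $B_N^*$). Hence if $v'+2w'$ lies on an edge of $B_{\widehat N}^*$ with $v'$ a vertex and $w'=\rho_N^*w$ for some $w\in\R_+\cdot e$ up to scale lying in the image subspace, then pulling back (dividing by $\deg\rho_N$) exhibits a vertex $v$ of $B_N^*$ and the corresponding multiple of $e$ with $v+2w$ on an edge of $B_N^*$ — forcing $e\in\Theta_N$. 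This is why $\Theta_N$, rather than $\Xi_N$, appears: iterating over covers of arbitrarily large degree rescales $e$, so one must exclude the whole ray $\R_+\cdot\Xi_N$.

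I expect the main obstacle to be exactly this last point: making precise and correct the claim that the transfer $\rho_N^*$ respects the face structure of the dual Thurston norm balls, and carefully tracking the scaling factors ($\deg\rho_N$ from multiplicativity of the Thurston norm, and $q$ from \eqref{equ:coverq}) so that ``$v+2w$ on an edge of $B_{\widehat N}^*$'' genuinely pulls back to ``(scaled) $v+2$(scaled)$w$ on an edge of $B_N^*$''. One subtlety is that the image $\rho_N^*H^2(N;\R)$ need not be a coordinate subspace adapted to $B_{\widehat N}^*$, so ``edge of $B_{\widehat N}^*$'' must be intersected with this subspace; but since $\rho_N^*$ is (up to scale) a normed-space embedding, the image of $B_N^*$ is $B_{\widehat N}^*\cap\rho_N^*H^2(N;\R)$ and its faces are intersections of faces of $B_{\widehat N}^*$ with the subspace, which is enough. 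Once this dictionary is in place, the contrapositive ``$\widehat e\in\Xi_{\widehat N}\Rightarrow e\in\Theta_N$'' follows, Lemma \ref{above} applies to $\widehat M$, and the theorem is proved. Finally, the second Remark after Lemma \ref{above} is justified by the same argument: an arbitrary finite cover of $M$ restricts to a finite cover $\widehat N$ of $N$ together with the bundle $\widehat p$, and the self--intersection and Thurston-norm terms behave multiplicatively as above, so no genuinely new information beyond Lemma \ref{above} for $\widehat M$ is obtained.
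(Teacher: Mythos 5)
Your overall structure is reasonable: reduce to the lower bound $x(\rho_M^*\a)\geq \deg(\rho_M)\cdot x(\a)$, and try to get it from the refined adjunction inequality applied to $\widehat M\to\widehat N$. But this puts the entire burden on verifying $\widehat e\notin\Xi_{\widehat N}$, and that step is where a genuine gap appears.

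Recall that $\mathcal{E}_{\widehat N}$ is defined by the existence of a \emph{vertex} $\widehat v$ of $B^*_{\widehat N}$ and an \emph{edge} of $B^*_{\widehat N}$ through $\widehat v + 2\widehat e$. You correctly observe that $\rho_N^*\colon H^2(N;\R)\to H^2(\widehat N;\R)$ is an isometric embedding for the dual Thurston norms, so that $B_{\widehat N}^*\cap\rho_N^*H^2(N;\R)=\rho_N^*(B_N^*)$. But this does not let you ``pull back'' the offending configuration: in general $b_1(\widehat N)>b_1(N)$, the polyhedron $B^*_{\widehat N}$ has vertices and edges strictly outside the slice $\rho_N^*H^2(N;\R)$, and the vertices of that slice need not be vertices of $B^*_{\widehat N}$. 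So the vertex $\widehat v$ and the relevant edge witnessing $\widehat e\in\mathcal{E}_{\widehat N}$ need not lie in the slice, and there is no $v\in B_N^*$ to pull them back to. The contrapositive ``$\widehat e\in\Xi_{\widehat N}\Rightarrow e\in\Theta_N$'' is left unproven, and since $\Xi_{\widehat N}$ really does depend on features of $B^*_{\widehat N}$ invisible from $N$, I do not think it is true without further hypotheses. (There is also a minor slip: you write $\widehat p_*\rho_M^*\a=\rho_N^*p_*\a$, but the correct relation carries the factor $q$ of Section \ref{section:prelim}, $\widehat p_*\rho_M^*\a=q\cdot\rho_N^*p_*\a$; this is harmless once $\deg\rho_M=q\deg\rho_N$ is used, but as written your displayed equality only holds when $q=1$.)

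The paper avoids the problematic transfer of $\Xi$ entirely. Instead of trying to verify the hypothesis of Lemma \ref{above} for $\widehat M$, it passes to a further cover $\wti N$ of $\widehat N$ (hence of $N$) furnished by Lemma \ref{lem:agol}, sets $\wti M$ to be the corresponding pull-back of $\widehat M$, and reruns the Seiberg--Witten argument from the proof of Lemma \ref{above} \emph{directly} on this tower. The point is that the failure of that argument is detected not by $\wti e\in\mathcal{E}_{\wti N}$, but by $\tfrac{1}{\deg\pi_N}\pi_{N*}\wti e\in\mathcal{E}_N$, a condition back in $H^2(N;\R)$. Since $q\wti e=\pi_N^*e$, this pushed-forward class equals $\tfrac{1}{q}e$, a positive rational multiple of $e$, and the hypothesis $e\notin\Theta_N$ rules it out of $\mathcal{E}_N$ immediately --- no comparison of the dual norm balls of $N$ and $\widehat N$ is ever needed, and no further combinatorial lemma about transfers of polyhedra is required. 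You should either adopt this route, or supply a genuine proof (with a careful handling of vertices outside the image subspace) of the implication $e\notin\Theta_N\Rightarrow\widehat e\notin\Xi_{\widehat N}$; as it stands the key step is not justified.
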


\begin{proof}
We will only consider in the proof the case where $b_1(N)>2$ and that $e$ is non--torsion.
The other cases are very similar and can be treated as in the proof of Theorem \ref{above}, and we will  thus leave them to the reader.

 Let $\rho_M\colon \widehat{M} \to M$ be any finite cover. As discussed in Section \ref{section:prelim}, $\widehat{M}$ carries an $S^1$--action with orbit space ${\widehat N}$, which in turns covers $N$ with covering map $\rho_N\colon \widehat{N} \to N$.

According to  Lemma \ref{lem:agol} there exists a finite cover $\tau\colon \wti{N} \to \widehat{N}$ where the transfer of any class in $H_2({\widehat N})$ is quasifibered. It is clear that the composition of the covering maps gives a finite cover $\pi_{N} \colon \wti{N} \to N$  which again has the property that  the transfer of any class in $H_2({N})$ is quasifibered.

We now denote by $\wti{M}$ the total space of the pull back under $\tau$ of the $S^1$--bundle $\widehat{M}\to \widehat{N}$.
In line with our previous notation, we denote as well with $\tau$ the $4$-dimensional covering map $\tau\colon \wti{M} \to \widehat{M}$. Also, we denote by $\pi_M\colon\wti{M}\to M$ the composition of the covers.
Finally we denote by $\wti{e}\in H^2(\wti{N})$  the Euler class of the $S^1$--bundle $\wti{M}\to \wti{N}$.

\begin{claim}
We have
\[ \frac{1}{\deg \pi_N } \pi_{N*} {\wti e} \notin {\mathcal E}_{N}.\]
\end{claim}

 As discussed in Section \ref{section:prelim}, the Euler class $\wti e$ of ${\wti p}\colon \wti{M} \to \wti{N}$ satisfies the equation $q {\wti e} = \pi_{N}^{*} e \in H^{2}({\wti N})$.
  The claim now follows from our assumption that the image of $e$ in $H^2(N;\R)$ is not the (integer) multiple of an element of ${\mathcal E}_{N}$, i.e. from our assumption that $e \notin \Theta_{N}$.

Now  let $\a\in H_2(M)$. We denote by $\widehat{\a} \in H^2({\widehat M})$ and $\wti{\a} \in H^{2}({\wti M})$ the corresponding classes given by the transfer maps.
From the proof of Theorem \ref{above} we see that the fact that
\[  \frac{1}{\deg \pi_N } \pi_{N*} {\wti e} \notin {\mathcal E}_{N}, \]
implies that there exists a class $h \in {\wti H}_{p_{*} \a}$
 such that the  inequality
  \[ x(\wti{\a})\geq |\wti{\a}\cdot \wti{\a}|+{\wti p}^{*} h \cdot \wti{\a}\]
   holds true. (Here and throughout the proof we adopt the notation of the proof of  Theorem \ref{above}.)
Now ${\wti p}^{*} h \cdot \wti{\a} = h \cdot {\wti p}_{*} \wti{\a}$ and using the equality ${\wti p}_{*} \wti{\a} = q \cdot \pi_{N}^{*} p_{*} \a \in H_{2}({\wti N};\R)$,
 with $q\in \N$ defined as in Section \ref{section:prelim}, this takes the form
 \[ \ba{rcl} x(\wti{\a})&\geq& |\wti{\a}\cdot \wti{\a}|+{\wti p}^{*} h \cdot \wti{\a}\\
 &\geq& |\wti{\a}\cdot \wti{\a}|+h \cdot {\wti p}_{*} \wti{\a}\\
 &=& |\wti{\a}\cdot \wti{\a}|+q \cdot (h\cdot \pi_{N}^{*} p_{*} \a)\\
 &=&   |\wti{\a}\cdot \wti{\a}|+ q\cdot  \| \pi_{N}^{*} p_{*} \a \|_{T},\ea \]
 where the last equality follows from  $h \in {\wti H}_{p_{*} \a}$.
 Using the equations in (\ref{equ:facts}) and the equality ${\deg \pi_M } =  q \, { \deg \pi_N }$, we can rewrite this as \begin{equation} \label{equ:adjcover} x(\wti{\a})\geq  {\deg \pi_M } ( |\a \cdot \a| + \| p_{*} \a \|_{T}). \end{equation}
Now  let $\a\in H_2(M)$ be any class such that \be \label{equ:xa} x(\a)=|\a\cdot \a|+\|p_*\a\|_T.\ee
By elementary reasons (see Equations (\ref{equ:facts})) we have
\[ x ({\wti \a}) \leq  {\deg \tau } \cdot x (\widehat{\a}) \leq {\deg \pi_M } \cdot x(\a).  \]
Playing this equation against Equations (\ref{equ:adjcover}) and (\ref{equ:xa}), we see that each inequality is in fact an equality, hence (as ${\deg \pi_M } = {\deg \tau } \cdot {\deg \rho_M }$)
we obtain multiplicativity of the complexity function as stated.

\end{proof}


\end{document}